\newcommand{\lvt}{\left|\kern-1.35pt\left|\kern-1.3pt\left|}
\newcommand{\rvt}{\right|\kern-1.3pt\right|\kern-1.35pt\right|}
\renewcommand*\env@matrix[1][*\c@MaxMatrixCols c]{%
 \hskip -\arraycolsep
 \let\@ifnextchar\new@ifnextchar
 \array{#1}}
\newtheorem{defi}{Definition}
\newtheorem{teo}{Theorem}
\newtheorem{pro}{Proposition}
\newcommand{\ii}{\hspace{-.045cm}\operatorname{i}}
\renewcommand{\d}{\operatorname{d} \hspace{-.0515cm}}
\newcommand{\Exp}[1]{\operatorname{e}^{#1}}
\renewcommand{\Re}{\operatorname{Re}}
\newcommand{\C}{\mathbb{C}}
\newcommand{\N}{\mathbb{N}}
\newcommand{\0}{{\bf \mathbf{0}}}
\def\@settitle{\begin{center}
 \baselineskip14\p@\relax
 \bfseries
 \uppercasenonmath\@title
 \@title
 \ifx\@subtitle\@empty\else
 \\[1ex]\uppercasenonmath\@subtitle
 \footnotesize\mdseries\@subtitle
 \fi
 \end{center}
}
\def\subtitle#1{\gdef\@subtitle{#1}}
\def\@subtitle{}
\title[Matrix Bessel Biorthogonal Polynomials: A Riemann--Hilbert approach]{Matrix Bessel Biorthogonal Polynomials:
 \\
A Riemann--Hilbert approach}
\subjclass[2020]{33C45, 33C47, 42C05, 47A56.}
 \keywords{Riemann--Hilbert problems; matrix Pearson equations;
 discrete integrable systems; non-Abelian discrete Painlev\'e IV equation}
\author[Branquinho]{Amílcar Branquinho\(^1\)}
\address{\(^1\)Departamento de Matem\'atica,
Universidade de Coimbra, 3001-454 Coimbra, Portugal}
\email{ajplb@mat.uc.pt}
\author[Foulqui\'e-Moreno]{Ana Foulqui\'e-Moreno\(^2\)}
\address{\(^2\)Departamento de Matemática, Universidade de Aveiro, 3810-193 Aveiro, Portugal}
\email{foulquie@ua.pt}
\author[Fradi]{Assil Fradi\(^3\)}
\address{ \(^3\)Mathematical Physics Special Functions and Applications Laboratory, The Higher School of Sciences and Technology of Hammam Sousse, University of Sousse, Sousse 4002, Tunisia}
\email{assilfradi@ua.pt}
\author[Mañas]{Manuel Mañas\(^4\)}
\address{ \(^4\)Departamento de Física Teórica, Universidad Complutense de Madrid, 28040-Madrid, Spain
}
\email{manuel.manas@ucm.es}
\thanks{ \(^1\)Acknowledges Centre for Mathematics of the University of Coimbra (funded by the Portuguese Government through FCT/MCTES, Multi-Annual Financing Program for R\&D Units, doi: 10.54499/UIDB/00324/2020).}
\thanks{ \(^2\) and \(^3\)Acknowledge the CIDMA Center for Research and Development in Mathematics and Applications (University of Aveiro) and the Portuguese Foundation for Science and Technology (FCT) Multi-Annual Financing Program for R\&D Units, for their support within projects doi: 10.54499/UIDB/04106/2020 \& doi: 10.54499/UIDP/04106/2020}
\thanks{ \(^4\)Acknowledge PID2021-122154NB-I00, entitled ``Ortogonalidad y Aproximaci\'on con Aplicaciones en Machine Learning y Teor\'\i a de la Probabilidad'', funded by MICIU/AEI/10.13039 /501100011033 and by ``RDF A Way of making Europe''.}
\begin{document}

%

\maketitle

\begin{abstract}
We consider matrix orthogonal polynomials related to Bessel type matrices of weights that can be defined in terms of a given matrix Pearson equation.
From a~Riemann--Hilbert problem we derive first and second order differential relations for the matrix orthogonal polynomials and functions of second kind.
It is shown that the corresponding matrix recurrence coefficients satisfy a non-Abelian extensions of a family of discrete Painlev\'e d-PIV equations.
We present some nontrivial examples of matrix orthogonal polynomials of Bessel type.
\end{abstract}

\section{Introduction}\label{sec:1}

In~\cite{zbMATH03048838}, Krall and Frink introduced the 
generalized Bessel polynomials (GBPs),
\begin{align}\label{eq:scalarbessel}
y_n(x,a,b)=
\,_2F_0\left(-n,a+n-1; - ;-\frac{x}{b}\right)
, && n \in \mathbb{N} \coloneqq \{ 0 ,1 , 2, \ldots \} .
\end{align}
These polynomials satisfy an orthogonality property on the unit circle, 
for all \( m, n \in \mathbb{N} \),
\begin{align}\label{Ortho-unit-circle}
 \int_{|z|=1} \rho^{(a, b)}(z) y_m(z , a, b) y_n(z , a, b) \frac{\d z}{2 \pi \ii} 
= \frac{(-1)^{n+1} n! \, b \, \Gamma(a) \, \delta_{m,n}}{(a+2n-1)\Gamma(a+n-1)}
 , 
\end{align}
where \( \delta_{m,n}\) is the Kronecker delta, \( \Gamma \) denotes the gamma function, and the weight function \( \rho^{(a,b)}\) is given by
\begin{align}\label{weight_not_Pearson}
\rho^{(a, b)}(z) 
\coloneqq
 \sum_{n=0}^{\infty} \frac{\Gamma(a)}{\Gamma(a+n-1)}\Big(-\frac{b}{z}\Big)^n.
\end{align}
As noted in~\cite{sym15040822}, the integral in~\eqref{Ortho-unit-circle} is not an inner product with the kernel
\begin{align*}
\rho^{(a, b)}(z) y_m(z , a, b) \overline{y_n(z , a, b)} ,
\end{align*}
which is unexpected for polynomials orthogonal on curves in the complex plane.
In the special case where \( a \in \mathbb{N}\) and \( m, n \in \mathbb{N} \), Burchnall provided a simpler orthogonality property over any closed contour \( C \) surrounding the origin \( z = 0\) in the complex plane (see \cite{zbMATH03064328}),
for all \( m, n \in \mathbb{N} \),
\begin{align*}
 \int_{C} z^{a-2} \Exp{-\frac{b}{z}} y_m(z, a, b) y_n(z, a, b) \frac{\d z}{2 \pi \ii}
= \frac{(-1)^{a+n-1} b^{a-1} n!}{(a+n-2)!(a+2n-1)} \delta_{m,n} .
\end{align*}
Later, an orthogonality relation on the positive real axis for all \( m, n \in \mathbb{N} \), with \( \Re(a) < 1 - m - n\),
was given in~\cite{MR913042}, where the scaling factor \( b \) has been put equal to unity without loss of generality,
for all \( m, n \in \mathbb{N} \),
\begin{align*}
\int_0^{\infty} x^{a-2} \Exp{-\frac{b}{x}} y_m(x, a, b) y_n(x, a, b) \d x
= \frac{(-1)^n b^{a-1} n!}{(a+2n-1) \Gamma(a+n-1)} \frac{\pi}{\sin(\pi a)} \delta_{m,n},
\end{align*}
and in the same work, the following was established,
for all \( m, n \in \mathbb{N} \),
\begin{align*}
 \int_0^{(\infty+)} z^{a-2} \Exp{-\frac{b}{z}} y_m(z; a, b) y_n(z; a, b) \frac{\d z}{2 \pi \ii}
= \frac{(-1)^{a+n} b^{a-1} n!}{(a+2n-1) \Gamma(a+n-1)} \delta_{m,n}, 
\end{align*}
where the contour is taken along a simple loop starting at the origin \( (z = 0)\), encircling the point at infinity once in the positive (counterclockwise) direction, and then returning to the~origin.
For this choice of contour,
\( \sigma(z) = z^{a-2} \Exp{-b / z}\), 
satisfies the Pearson equation
\(
\big(z^2 \sigma\big)^\prime = (az + b)\sigma \),
which is crucial for deriving the main explicit formulas in this work.

On the other hand, the weight function \( \rho^{(a,b)}\), defined in~\eqref{weight_not_Pearson} and satisfying the orthogonality relation~\eqref{Ortho-unit-circle}, does not satisfy a Pearson equation except in the cases \( a = 1\) or \( a = 2\). Specifically,
\begin{align*}
\big(z^2 \rho^{(a,b)}\big)^{\prime} = (a z + b) \rho^{(a,b)} - (a-1)(a-2)z .
\end{align*}
For further details, see~\cite[equation 40]{zbMATH03048838} and~\cite{sym15040822} for an overview on this subject.

Our study of matrix valued orthogonal polynomials of Bessel type is motivated by the fact that the systems of orthogonal polynomials associated with the names of Hermite, Laguerre, Jacobi (including the special cases named after Chebyshev, Legendre, and Gegenbauer), and~Bessel are among the most widely studied and applied systems. However, while a Riemann--Hilbert approach has been used to discuss matrix biorthogonal polynomials (MBPs) of Hermite type \cite{zbMATH07310648,zbMATH06055477,zbMATH07258831,zbMATH06050717}, Laguerre type \cite{math10081205}, and Jacobi type \cite{zbMATH07757742}, to our knowledge, this approach has only been applied to scalar Bessel-type orthogonal polynomials \cite{zbMATH06401645}. Therefore, in this work, we focus on matrix valued orthogonal polynomials of Bessel type.

Our aim is to investigate the MBPs associated with Bessel-type matrices of weights, which are constructed in terms of matrix Pearson equations.

In this work, we deal with \emph{regular matrix of weights} \( W(z)\),~i.e., their moments
\(
W_n 
= \int_\gamma z^n W (z) \frac{\d z} {2\pi\ii} \),
\( n\in\N \),
are such that
\( \det \big[ W_{j+k} \big]_{j,k=0, \ldots n}\not = 0 \), \( n \in \N \).

\begin{defi}\label{def:1}
We say that a \( N\times N \) \emph{matrix of weights} 
\( W=\begin{bsmallmatrix} 
W^{(1,1)} & \cdots & W^{(1,N)} \\
\vdots & \ddots & \vdots \\
W^{(N,1)} & \cdots & W^{(N,N)} 
\end{bsmallmatrix}\) with support~\( \gamma \),
taken along a simple loop starting at the origin, encircling the point at infinity once in the positive 
direction, and then returning to the origin,
 is of \emph{Bessel type} if
the entries~\( W^{(j, k)}\) of the matrix measure \( W \) can be written as
\begin{align}\label{eq:the_weights_Bessel}
W^{(j, k)}(z)=\sum_{m \in I_{j, k}} \varphi_{m}(z) z^{p_{m}} \log^{q_{m}} (z) \Exp{-\frac{\lambda_m}{z}}
 , 
&&
z \in \gamma ,
\end{align} 
where \( I_{j, k}\) denotes a finite set of indexes, \( j,k \in \{ 1,2 , \ldots, N \}\), \( \Re(\lambda_m)\,\geq 0\), \( q_m\in \N\) and \( \varphi_{m}\)
is H\"{o}lder con\-tin\-u\-ous,
bounded and nonvanishing on~\( \gamma \).
\end{defi}

We will request, in the development of the theory, that the functions~\( \varphi_{m}\) have a holomorphic extension to the whole complex plane.

Given a regular matrix of weights \( W \), we define two \emph{sequences of matrix monic polynomials},
 \( \big\{ P_{n}^{\mathsf L} (z) \big\}_{n \in \mathbb{Z}_{+}}\) and \( \big\{ P_{n}^{\mathsf R}(z)\big\}_{n \in \mathbb{N}}\), which are left, respectively right-orthogonal. These polynomials satisfy the degree conditions
\( \deg P_{n}^{\mathsf L} (z)=n\), \( \deg P_{n}^{\mathsf R} (z)=n\), \( n \in \mathbb{N}\),
and are defined by
 the~or\-thog\-o\-nal\-i\-ty relations,
\begin{align} \label{eq:ortogonalidadL}
 \int_\gamma {P}_n^{\mathsf L} (z) W (z) z^k \frac{\d z} {2\pi\ii}
= \delta_{n,k} C_n^{-1} , &&
 \int_\gamma z^k W (z) {P}_n^{\mathsf R} (z) \frac{\d z} {2\pi\ii}
= \delta_{n,k} C_n^{-1} , 
\end{align}
for all \( k \in \{ 0, 1, \ldots , n \} \) and \( n \in \mathbb{N}\),
where \( C_n \) is a nonsingular matrix.

The weight matrix \( W \) induces a sesquilinear form on the set of matrix polynomials~\( \C^{N\times N}[z]\) given by
\begin{align*}
\langle P,Q \rangle_W \coloneqq \int_\gamma {P} (z) W (z) Q(z) \frac{\d z} {2\pi\ii},
\end{align*}
with respect to which the sequences \( \big\{ P_n^{\mathsf L}(z)\big\}_{n\in\N}\) and \( \big\{ P_n^{\mathsf R}(z)\big\}_{n\in\N}\) are biorthogonal,
\(
\big\langle P_n^{\mathsf L}, {P}_m^{\mathsf R} \big\rangle_ W
 = \delta_{n,m} C_n^{-1}
 \), 
\(n , m \in \mathbb N \).
Since the polynomials are monic, they can be expressed as follows,
\begin{align*}
{P}_n^{\mathsf L} (z) &= \operatorname I z^n + p_{\mathsf L,n}^1 z^{n-1} + p_{\mathsf L,n}^2 z^{n-2} + \cdots + p_{\mathsf L,n}^n , \\
{P}_n^{\mathsf R} (z) &= \operatorname I z^n + p_{\mathsf R,n}^1 z^{n-1} + p_{\mathsf R,n}^2 z^{n-2} + \cdots + p_{\mathsf R,n}^n , 
\end{align*}
where \( p_{\mathsf L, n}^k , p_{\mathsf R, n}^k \in 
\C^{N\times N} \), \( k \in \{ 0, \ldots, n \} \) and \( n \in \mathbb N \), with the condition \( p_{\mathsf L,n}^0 = p_{\mathsf R,n}^0=\operatorname I \), \( n \in \mathbb N \) and \( \operatorname I\in\C^{N\times N}\) denotes the identity matrix.

We define the \emph{sequence of second kind matrix functions}, for all \( n \in \mathbb N \), by
\begin{align*}
Q^{\mathsf L}_n (z)
\coloneqq \int_\gamma \frac{P^{\mathsf L}_n (z')}{z'-z} { W (z')} \frac{\d z^\prime} {2\pi\ii} , 
 &&
{Q}_n^{\mathsf R} (z)
\coloneqq \int_\gamma W (z') \frac{P^{\mathsf R}_{n} (z')}{z'-z} \frac{\d z^\prime} {2\pi\ii} .
\end{align*} 
From the orthogonality conditions~\eqref{eq:ortogonalidadL}, the following asymptotic expansions hold for all \( n \in \N\), 
as~\( |z|\to\infty\)
\begin{align} \label{eq:secondkindlaurent}
\hspace{-.135cm}
Q^{\mathsf L}_n (z) = - C_n^{-1} z^{-n}\big( \operatorname I z^{-1} + q_{\mathsf L,n}^1 z^{-2} + \cdots \big),
Q^{\mathsf R}_n (z) = - z^{-n} \big( \operatorname I z^{-1} + q_{\mathsf R,n}^1 z^{-2} + \cdots \big)C_n^{-1} . 
\end{align}

The sequences \( \big\{ {P}^{\mathsf L}_n (z)\big\}_{n\in\N} \) and \( \big\{ {Q}^{\mathsf L}_n (z)\big\}_{n\in\N} \) satisfy the following three-term recurrence~relations, for all \( n \in \mathbb{N} \),
\begin{align*}
z {P}^{\mathsf L}_n (z) &= {P}^{\mathsf L}_{n+1} (z) + \xi^{\mathsf L}_n {P}^{\mathsf L}_{n} (z) + \eta^{\mathsf L}_n {P}^{\mathsf L}_{n-1} (z), 
&&
z Q_{n}^{\mathsf L}(z) = Q_{n+1}^{\mathsf L}(z) + \xi_{n}^{\mathsf L} Q_{n}^{\mathsf L}(z) + \eta^{\mathsf L}_n Q_{n-1}^{\mathsf L}(z),
\end{align*}
where \( \eta^{\mathsf L}_n 
 = C_n^{-1} C_{n-1}\), 
with the initial conditions 
\(
{P}^{\mathsf L}_{-1} = \0 \), 
\(
{P}^{\mathsf L}_{0} = \operatorname I\),
\(
 Q_{-1}^{\mathsf L}(z) = -C_{-1}^{-1}\),
 and
\(
 Q_{0}^{\mathsf L}(z) = S_{W}(z)\),
and \( S_{W}(z)\) is the Stieltjes--Markov transform given by 
\begin{align*}
S_{W}(z) \coloneqq \int_{\gamma} \frac{W\left(z^{\prime}\right)}{z^{\prime}-z} 
\frac{\d z^\prime}{2 \pi \ii }.
\end{align*}
Similarly, the sequences of monic polynomials \( \big\{ {P}^{\mathsf R}_n (z)\big\}_{n\in\N} \) and second kind matrix functions \( \big\{ {Q}^{\mathsf R}_n (z)\big\}_{n\in\N} \) satisfy the relations, for all \( n \in \mathbb{N} \),
\begin{align*}
z P^{\mathsf R}_n (z) &= P^{\mathsf R}_{n+1} (z) + P^{\mathsf R}_{n} (z) \xi^{\mathsf R}_n + P^{\mathsf R}_{n-1} (z) \eta^{\mathsf R}_n,
 &&
z Q^{\mathsf R}_n (z) = Q^{\mathsf R}_{n+1} (z) + Q^{\mathsf R}_{n} (z) \xi^{\mathsf R}_n + Q^{\mathsf R}_{n-1} (z) \eta^{\mathsf R}_n,
\end{align*}
where \( \eta^{\mathsf R}_n
 = C_n \eta^{\mathsf L}_n C_n^{-1} = C_{n-1} C_n^{-1}\), 
with initial conditions 
\(
{P}^{\mathsf R}_{-1} = \0\), 
\(
{P}^{\mathsf R}_{0} = \operatorname I \),
\(
Q_{-1}^{\mathsf R}(z) = -C_{-1}^{-1}\)
and
\(Q_{0}^{\mathsf R}(z) = S_{W}(z) \),
and the relation \( \xi_n^{\mathsf R} 
 = C_n \xi^{\mathsf L}_n C_n^{-1}\)
holds.

These objects are collected in the matrices,
\begin{align}
Y^{\mathsf L}_n (z) & 
 \coloneqq 
\begin{bmatrix}
 {P}^{\mathsf L}_{n} (z) & Q^{\mathsf L}_{n} (z) \\[.05cm]
-C_{n-1} {P}^{\mathsf L}_{n-1} (z) & -C_{n-1} Q^{\mathsf L}_{n-1} (z)
 \end{bmatrix}
 , \
\label{eq:ynl}
{Y}^{\mathsf R}_n (z) 
 \coloneqq 
\begin{bmatrix}
P^{\mathsf R}_{n} (z) & - P^{\mathsf R}_{n-1} (z) C_{n-1} \\[.05cm]
{Q}^{\mathsf R}_{n} (z) & - {Q}^{\mathsf R}_{n-1} (z) C_{n-1}
 \end{bmatrix} .
\end{align}
The three term recurrence relations for \( P^{\mathsf L}_{n}\), \(P^{\mathsf R}_{n}\), \( Q^{\mathsf L}_{n}\) and~\( Q^{\mathsf R}_{n}\) can be expressed as
\begin{align*}
Y_{n+1}^{\mathsf L} (z) & 
 = T^{\mathsf L}_n (z) Y^{\mathsf L}_n (z),&&
 Y_{n+1}^{\mathsf R} (z)
= Y^{\mathsf R}_n (z) T^{\mathsf R}_n (z) ,
 &&
n \in \mathbb N ,
\end{align*}
where the transfer matrices \(T^{\mathsf L}_n\) and \(T^{\mathsf R}_n\) are given by,
\begin{align*}
 T^{\mathsf L}_n=\begin{bmatrix}
z \operatorname I - \xi_n^{\mathsf L} & C_{n}^{-1} \\
-C_{n} & \0
\end{bmatrix},
&& T^{\mathsf R}_n
=
\begin{bmatrix}
z \operatorname I - \xi_n^{\mathsf R} & -C_{n} \\
C_{n}^{-1} & \0
\end{bmatrix}.
\end{align*}
Moreover, it is known, cf.~\cite{zbMATH07310648}, that
\begin{align*}
\big( {Y}^{\mathsf L}_n (z)\big)^{-1} = 
 \begin{bmatrix}
\0&\operatorname I\\
-\operatorname I &\0
 \end{bmatrix}
 {Y}^{\mathsf R}_n(z)
 \begin{bmatrix}
\0&-\operatorname I\\
\operatorname I &\0
 \end{bmatrix} .
\end{align*}

The structure of this work is as follows: In \S~\ref{sec:2} we formulate the Riemann--Hilbert problem for the matrix Bessel weights. Next, we study the analytic properties of the constant jump fundamental matrix associated with a matrix weight function that satisfies a Pearson-type equation of Bessel type. In \S~\ref{sec:3} we employ a Riemann--Hilbert approach to derive first and second-order differential equations from the structure matrix. We show how these equations reduce to well-known second-order differential equations in the scalar case. We also studied a nontrivial example of classical Bessel type matrix weight. Additionally, in \S~\ref{sec:4}, we~identify a discrete nonlinear relation for the recursion coefficients, which extends the concept of discrete Painlevé equations, by deriving an explicit expression for the structure matrix, and we illustrate our findings with an explicit example of Bessel type matrix weight.

\section{Riemann--Hilbert Formulation for MBPs}\label{sec:2}

Now, we return to the matrix Bessel weights
in Definition~\ref{def:1}.
Since the measures \( W^{(j,k)} \), \( j,k \in \{ 1, \ldots , N \} \) are H\"{o}lder continuous,
applying the Plemelj's formula, cf.~\cite{zbMATH03227290}, yields the following fundamental jump conditions,
\begin{align} \label{eq:inverseformula}
\big( Q^{\mathsf L}_n (z) \big)_+ - \big( Q_n^{\mathsf L} (z) \big)_- &= {P}^{\mathsf L}_n (z) W (z), 
 &&
\big( Q^{\mathsf R}_n (z) \big)_+ - \big( Q^{\mathsf R}_n (z) \big)_- = W (z){P}^{\mathsf R}_n (z) ,
\end{align}
for \( z\in\gamma\), where, \( \big( f(z) \big)_{\pm} = \lim_{\epsilon \to 0^{\pm}} f(z + \ii \hspace{-.045cm} \epsilon ) \), with \( \pm\) indicating the regions according to the ori\-en\-ta\-tion of the curve \( \gamma \).

Now, we state a theorem on Riemann--Hilbert problem for the Bessel type weights.
\begin{teo} \label{teo:LRHP_Bessel}
Given a regular Bessel type matrix of weights \( W \) with support on 
\( \gamma \), given in Definition~\ref{def:1}, we have the matrix function \( Y^{\mathsf L}_n\) and \( Y^{\mathsf R}_n\), defined 
in~\eqref{eq:ynl} are, for each \( n \in \N\), the unique solution of the following Riemann--Hilbert problem, which consists, respectively, in the determination of a~\(2N \times 2 N\) complex matrix function~such~that,
\begin{enumerate}[\rm (RH1)]
\item
\( Y_n^{\mathsf L}\) and \( Y_n^{\mathsf R}\) are holomorphic in \( \C \setminus \gamma \),

\item
satisfies the jump condition
\begin{align*} 
\big( Y^{\mathsf L}_n (z) \big)_+
=
\big( Y^{\mathsf L}_n (z) \big)_- \,
 \begin{bmatrix} \operatorname I & W (z) \\ \0 & \operatorname I \end{bmatrix} , 
 &&
\big( Y^{\mathsf R}_n (z) \big)_+ 
=
 \begin{bmatrix}
 \operatorname I & \0 \\ 
 W (z) & \operatorname I 
 \end{bmatrix} \big( Y^{\mathsf R}_n (z) \big)_- ,
 && z \in \gamma ,
\end{align*}
\item
have the following asymptotic behavior,
as \( |z| \to \infty\)
\begin{align*} 
 Y_n^{\mathsf L} (z)
 =
\Big(1+\operatorname{O}\big(\frac{1} {z}\big)\Big)
 \begin{bmatrix}z^{n}\operatorname I & \0 \\ 
 \0 & z^{-n} \operatorname I 
 \end{bmatrix} , 
&&
Y_n^{\mathsf R} (z) = 
 \begin{bmatrix}
 z^n \operatorname I & \0 \\ 
\0 & z^{-n} \operatorname I 
 \end{bmatrix}
\Big( 1 + \operatorname{O}\big(\frac{1}{z}\big) \Big) ,
\end{align*}
\item
\(
Y^{\mathsf L}_n (z) 
= \begin{bmatrix}
\operatorname{O} (1) & s^{\mathsf L}_{1}(z) \\[.05cm]
\operatorname{O} (1) & s^{\mathsf L}_{2}(z) 
 \end{bmatrix} \),
 \quad
 \(
Y^{\mathsf R}_n (z) 
= 
\begin{bmatrix}
\operatorname{O} (1) & \operatorname{O} (1) \\ 
s^{\mathsf R}_1(z) & s^{\mathsf R}_2(z) 
 \end{bmatrix} \), 
\quad as 
\quad
\(z \to 0\),
\quad
with
\(
\lim_{z \to 0} z s^{\mathsf L}_j(z) = \0
 \), 
\(
\lim_{z \to 0} z s^{\mathsf R}_j(z) = \0 
\),
\(
j=1,2
\).
\end{enumerate}
\end{teo}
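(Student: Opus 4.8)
I would split the argument in the two customary steps: first, checking that the matrices $Y^{\mathsf L}_n$ and $Y^{\mathsf R}_n$ assembled in~\eqref{eq:ynl} do satisfy (RH1)--(RH4); second, showing that each of the two Riemann--Hilbert problems admits at most one solution. Only the left problem is described below, the right one being obtained by interchanging left and right multiplication throughout.

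\emph{The constructed matrix solves the problem.} Property (RH1) is immediate: $P^{\mathsf L}_n,P^{\mathsf L}_{n-1}$ are polynomials, hence entire, while $Q^{\mathsf L}_n,Q^{\mathsf L}_{n-1}$ are Cauchy transforms of functions supported on~$\gamma$, hence holomorphic on $\C\setminus\gamma$. For (RH2), the first block column of $Y^{\mathsf L}_n$ is polynomial and thus has no jump across~$\gamma$, while the jump of its second block column equals $\left[\begin{smallmatrix}P^{\mathsf L}_n(z)W(z)\\ -C_{n-1}P^{\mathsf L}_{n-1}(z)W(z)\end{smallmatrix}\right]$ by the Plemelj relations~\eqref{eq:inverseformula}; this is precisely the effect of multiplying $\big(Y^{\mathsf L}_n(z)\big)_-$ on the right by $\left[\begin{smallmatrix}\operatorname I & W(z)\\ \0 & \operatorname I\end{smallmatrix}\right]$. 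Property (RH3) follows by multiplying $Y^{\mathsf L}_n(z)$ on the right by $\diag\big(z^{-n}\operatorname I,z^{n}\operatorname I\big)$ and invoking the monic normalisation of $P^{\mathsf L}_n,P^{\mathsf L}_{n-1}$ together with the Laurent expansions~\eqref{eq:secondkindlaurent} of $Q^{\mathsf L}_n,Q^{\mathsf L}_{n-1}$: the four blocks then read $\operatorname I+\operatorname{O}(1/z)$, $\operatorname{O}(1/z)$, $\operatorname{O}(1/z)$ and $\operatorname I+\operatorname{O}(1/z)$, respectively. Finally, in (RH4) the two polynomial entries are trivially $\operatorname{O}(1)$ as $z\to0$, so the only point that calls on the Bessel structure of Definition~\ref{def:1} is the claim $z\,Q^{\mathsf L}_n(z)\to\0$ and $z\,Q^{\mathsf L}_{n-1}(z)\to\0$ as $z\to0$. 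Here I would use that near the base point of the loop each entry of $W$ is a finite sum of terms $\varphi_m(z)z^{p_m}\log^{q_m}(z)\Exp{-\lambda_m/z}$ with $\varphi_m$ bounded and $\Re\lambda_m\ge0$, so that $W$ is integrable at~$0$ along~$\gamma$ (the factors $\Exp{-\lambda_m/z}$ decaying, or at worst remaining bounded, as $z\to0$ along~$\gamma$); the classical estimates for a Cauchy integral near an endpoint of its contour of integration, cf.~\cite{zbMATH03227290}, then deliver the required $\operatorname{o}(1/z)$ behaviour.

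\emph{Uniqueness.} Let $\widetilde Y^{\mathsf L}_n$ be any solution of (RH1)--(RH4) and put $X(z)\coloneqq\widetilde Y^{\mathsf L}_n(z)\big(Y^{\mathsf L}_n(z)\big)^{-1}$, which is well defined on $\C\setminus\gamma$ because $Y^{\mathsf L}_n(z)$ is invertible there, its inverse being written explicitly in terms of $Y^{\mathsf R}_n(z)$ by the identity recalled above from~\cite{zbMATH07310648}. Since $\widetilde Y^{\mathsf L}_n$ and $Y^{\mathsf L}_n$ have the same invertible jump matrix on~$\gamma$, the boundary values of $X$ coincide, $\big(X(z)\big)_+=\big(X(z)\big)_-$ on $\gamma\setminus\{0\}$, so $X$ continues holomorphically to $\C\setminus\{0\}$. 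As $z\to\infty$, (RH3) for both matrices gives $X(z)\to\operatorname I$. As $z\to0$, combining the $\operatorname{o}(1/z)$ bound on the second block column of $\widetilde Y^{\mathsf L}_n$ with the behaviour of $\big(Y^{\mathsf L}_n(z)\big)^{-1}$ near~$0$ --- which, via that identity, has first block row $\operatorname{o}(1/z)$ and second block row $\operatorname{O}(1)$, a consequence of (RH4) for $Y^{\mathsf R}_n$ --- one obtains $X(z)=\operatorname{o}(1/z)$ entrywise near the isolated singularity $z=0$. Estimating the Laurent coefficients of $X$ on circles $|z|=r$ as $r\to0$ then forces every coefficient of negative index to vanish, so $z=0$ is a removable singularity; hence $X$ is entire and bounded, and Liouville's theorem yields $X\equiv\operatorname I$, that is, $\widetilde Y^{\mathsf L}_n=Y^{\mathsf L}_n$. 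The right problem is handled in the same way.

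The step I expect to be the real obstacle is the endpoint analysis behind (RH4): one must control the essential singularity $\Exp{-\lambda_m/z}$ of the Bessel weight at the base point of the loop and extract the sharp $\operatorname{o}(1/z)$ decay of the corresponding second kind functions. Once that estimate is in hand, the remainder is routine bookkeeping with the jump relations~\eqref{eq:inverseformula}, the Laurent expansions~\eqref{eq:secondkindlaurent}, and the Liouville argument.
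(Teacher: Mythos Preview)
Your proposal is correct and follows essentially the same approach as the paper: (RH1)--(RH3) come directly from the definitions, the Plemelj relations~\eqref{eq:inverseformula}, and the expansions~\eqref{eq:secondkindlaurent}; (RH4) is obtained from the classical endpoint estimates for Cauchy integrals in~\cite{zbMATH03227290}; and uniqueness follows from a Liouville argument once the isolated singularity at $z=0$ is shown to be removable via the $o(1/z)$ bound.

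The only cosmetic difference is in the packaging of uniqueness. You form $X=\widetilde Y^{\mathsf L}_n\,(Y^{\mathsf L}_n)^{-1}$ and invoke the identity $(Y^{\mathsf L}_n)^{-1}=\begin{bsmallmatrix}\0&\operatorname I\\-\operatorname I&\0\end{bsmallmatrix}Y^{\mathsf R}_n\begin{bsmallmatrix}\0&-\operatorname I\\\operatorname I&\0\end{bsmallmatrix}$ from~\cite{zbMATH07310648} to read off the behaviour of the inverse near~$0$. The paper instead pairs an arbitrary left solution directly with the right solution $Y^{\mathsf R}_n$, forming $G=\widetilde Y^{\mathsf L}_n\begin{bsmallmatrix}\0&\operatorname I\\-\operatorname I&\0\end{bsmallmatrix}Y^{\mathsf R}_n\begin{bsmallmatrix}\0&-\operatorname I\\\operatorname I&\0\end{bsmallmatrix}$, and deduces $G\equiv\operatorname I$ by the same Liouville and removable-singularity reasoning; this simultaneously \emph{proves} the inverse identity and yields uniqueness (any left solution equals the inverse of a fixed matrix). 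Since you are citing that identity as already known, the two arguments amount to the same computation in a different order.
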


\begin{proof}
{\rm (RH1)}, {\rm (RH2)}, and {\rm (RH3)} 
are direct consequences of the representation of the second kind functions~\eqref{eq:secondkindlaurent}
and the inverse formulas~\eqref{eq:inverseformula}.

We will prove~{\rm (RH4)} and the unicity of the solution.
The entries \( W^{j,k}\) of the matrix measure \( W \) are given in~\eqref{eq:the_weights_Bessel}.
It holds (cf.~\cite{zbMATH03227290}) that in a neighborhood of \( z=0\) the Cauchy transform
\begin{align*}
\phi_{m}(z) = 
\int_\gamma \frac{ p(t) A_m(t) t^{\alpha_{m}} \log^{p_{m}} (t) \Exp{-\frac{\lambda_m}{t}}}{t-z}
\frac{ \d t}{2 \pi \ii },
\end{align*}
where \(p(t)\) denotes any polynomial in \(t \),
that satisfies
 \( 
\lim_{z \to 0} z \phi_{m}(z) =0
\).
Then,~\({\rm (RH4)}\) is fulfilled by the matrices \( Y^{\mathsf L}_n,Y^{\mathsf R}_n\), respectively.
To prove the unicity of both Riemann--Hilbert problems let~us consider the matrix~function
\begin{align*}
G(z) = {Y}^{\mathsf L}_n (z) 
\begin{bmatrix}
\0 & \operatorname I \\
- \operatorname I &\0 
\end{bmatrix}
{Y}^{\mathsf R}_n(z)
\begin{bmatrix}
\0 &- \operatorname I \\
\operatorname I &\0 
\end{bmatrix}
 .
\end{align*}
The function \( G(z) \) has no jump or discontinuity on the curve \( \gamma \) and its behavior at
\( 0 \) is given by
\begin{align*}
G(z) &
 \sim{}
\begin{bmatrix}
 s^{\mathsf L}_1(z) + s^{\mathsf R}_2(z) & s^{\mathsf L}_1(z) + s^{\mathsf R}_1(z) \\[.05cm]
 s^{\mathsf L}_2(z) + s^{\mathsf R}_2(z) & s^{\mathsf L}_2(z) + s^{\mathsf R}_1(z)
\end{bmatrix}, & 
z&\to 0,
\end{align*}
so it holds that
\( 
\lim_{z \to 0} z G(z) = 0
\)
and we conclude that 
\( 0 \) is a removable singularity of \( G\). 
Now, from the behavior at infinity,
\begin{align*}
G(z) \sim{} 
\begin{bmatrix}
\operatorname I z^n & \0 \\
\0 & \operatorname I z^{-n} 
\end{bmatrix}
\begin{bmatrix}
\0 &\operatorname I \\
-\operatorname I &\0 
\end{bmatrix}
\begin{bmatrix}
 \operatorname I z^n & \0 \\
 \0 & \operatorname I z^{-n} 
 \end{bmatrix}
\begin{bmatrix}
\0 &-\operatorname I \\
\operatorname I &\0 
\end{bmatrix} =
\begin{bmatrix}
\operatorname I &\0 \\
\0 & \operatorname I
\end{bmatrix} ,
\end{align*}
hence the Liouville theorem implies that \( G(z)= \operatorname I \).
To prove the unicity of the solution, we consider another solution
\( {\widetilde{Y}^{\mathsf L}}_n\)
of the left Riemann--Hilbert problem. Then,
\begin{align*}
{\widetilde{Y}^{\mathsf L}}_n(z)=\left(
\begin{bmatrix}
\0 & \operatorname I \\
-\operatorname I & \0
\end{bmatrix} Y^{\mathsf R}_n(z)
\begin{bmatrix}
\0 & -\operatorname I \\
\operatorname I & \0
\end{bmatrix}\right)^{-1} .
\end{align*}
Hence any solution of this left Riemann--Hilbert problem is equal to the inverse of a fixed matrix, and the uniqueness follows. We obtain the uniqueness of the solution of the right Riemann--Hilbert in a similar way.
\end{proof}

Here we consider matrix of weights, \( W \), satisfying a matrix Pearson type equation
\begin{align} \label{eq:Bessel_Pearson}
z^2 W^\prime (z) = h^{\mathsf L} (z) W (z) + W (z) h^{\mathsf R} (z) ,
\end{align}
with entire matrix functions \( h^{\mathsf L} \), \( h^{\mathsf R} \).
If we take a matrix function \( W_{\mathsf L}\) such that
\begin{align} \label{eq:partial_Bessel_Pearson1}
z^2 W_{\mathsf L}^\prime (z) = h^{\mathsf L} (z) W_{\mathsf L} (z) ,
\end{align}
then there exists a matrix function \( W_\mathsf R (z)\) such that \( W (z) = W_{\mathsf L}(z) W_{\mathsf R} (z)\) with
\begin{align} \label{eq:partial_Bessel_Pearson2}
z^2 W_{\mathsf R}^\prime (z) = W_{\mathsf R} (z) h^{\mathsf R} (z) .
\end{align}
The reciprocal is also true.

The solution of~\eqref{eq:partial_Bessel_Pearson1} and~\eqref{eq:partial_Bessel_Pearson2} will have possibly branch points
at \( 0 \), cf.~\cite{zbMATH07114556}.
This~means that exists constant matrices, \( \mathsf C^\mathsf L\), \( \mathsf C^\mathsf R\) such that
\begin{align}
\label{eq:salto0}
(W_{\mathsf L} (z))_- = (W_{\mathsf L} (z))_+ \mathsf C^\mathsf L, &&
(W_{\mathsf R} (z))_- = \mathsf C^\mathsf R (W_{\mathsf R} (z))_+, && \text{in } & \
\gamma .
\end{align}
We introduce, the \emph{constant jump fundamental matrices}, for all \( n \in \mathbb N\),
\begin{align}
\label{eq:zn1}
Z_n^{\mathsf L}(z) 
& \coloneqq Y^{\mathsf L}_n (z) 
\begin{bmatrix} 
W_{\mathsf L} (z) & \0 \\ 
\0 & W_{\mathsf R}^{-1} (z)
 \end{bmatrix} ,
 &&
{Z}^{\mathsf R}_n (z) \coloneqq
\begin{bmatrix} 
W_{\mathsf R} (z) & \0 \\ 
\0 & W_{\mathsf L}^{-1} (z) 
 \end{bmatrix}
{Y}^{\mathsf R}_n (z) .
\end{align}

\begin{teo}
The constant jump fundamental matrices~\( Z^{\mathsf L}_n\) and \( Z^\mathsf R_n\) satisfy, for each \( n \in \N\), the fol\-low\-ing~properties,
\begin{enumerate}[\rm i)]

\item
are holomorphic on \( \C \setminus \gamma \),
\item
present the fol\-low\-ing \emph{constant jump condition} on \( \gamma \)
\begin{align*}
\big( Z^{\mathsf L}_n (z) \big)_+ &
= \big( Z^{\mathsf L}_n (z) \big)_- 
\begin{bmatrix} 
\mathsf C^{\mathsf L} & \mathsf C^{\mathsf L}
 \\
\0 & \operatorname I 
\end{bmatrix}, 
 &&
\big( {Z}^{\mathsf R}_n (z) \big)_+ 
=
\begin{bmatrix} 
\operatorname I & \0
 \\
\mathsf C^{\mathsf R} & \mathsf C^{\mathsf R}
\end{bmatrix}
\big( {Z}^{\mathsf R}_n (z) \big)_- .
\end{align*}
\end{enumerate}
\end{teo}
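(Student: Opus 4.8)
For the final statement (the constant-jump fundamental matrices $Z^{\mathsf L}_n$, $Z^{\mathsf R}_n$), the claim splits into the holomorphy assertion (i) and the jump assertion (ii), and I would treat them in that order.

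For (i) the plan is to transport holomorphy from the building blocks. By (RH1) of Theorem~\ref{teo:LRHP_Bessel}, $Y^{\mathsf L}_n$ and $Y^{\mathsf R}_n$ are holomorphic on $\C\setminus\gamma$. The matrices $W_{\mathsf L}$ and $W_{\mathsf R}$ are solutions of the first order linear systems~\eqref{eq:partial_Bessel_Pearson1} and~\eqref{eq:partial_Bessel_Pearson2}, whose only singular points are $z=0$ and $z=\infty$; the associated branch cut can be (and, in~\eqref{eq:salto0}, already is) taken along $\gamma$, so $W_{\mathsf L}$ and $W_{\mathsf R}$ are single-valued and holomorphic on $\C\setminus\gamma$, and since $\det W_{\mathsf L}$ and $\det W_{\mathsf R}$ satisfy scalar linear ODEs they do not vanish there, whence $W_{\mathsf L}^{-1}$ and $W_{\mathsf R}^{-1}$ are holomorphic on $\C\setminus\gamma$ as well. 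The products in~\eqref{eq:zn1} are therefore holomorphic on $\C\setminus\gamma$, which is (i).

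For (ii) I would compute the jump of $Z^{\mathsf L}_n$ directly from~\eqref{eq:zn1}. Since $P^{\mathsf L}_n$ and $P^{\mathsf L}_{n-1}$ are entire, the only boundary values that enter are those of $Y^{\mathsf L}_n$ and of $W_{\mathsf L}$, $W_{\mathsf R}$, and one obtains
\[
\big(Z^{\mathsf L}_n\big)_-^{-1}\big(Z^{\mathsf L}_n\big)_+
=
\begin{bsmallmatrix} (W_{\mathsf L})_-^{-1} & \0\\ \0 & (W_{\mathsf R})_- \end{bsmallmatrix}
\begin{bsmallmatrix} \operatorname I & W\\ \0 & \operatorname I \end{bsmallmatrix}
\begin{bsmallmatrix} (W_{\mathsf L})_+ & \0\\ \0 & (W_{\mathsf R})_+^{-1} \end{bsmallmatrix},
\]
where the middle factor is the jump matrix from (RH2). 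Multiplying out, the result is automatically block upper triangular: the $(2,1)$ entry vanishes, the $(1,1)$ entry is $(W_{\mathsf L})_-^{-1}(W_{\mathsf L})_+$, the $(2,2)$ entry is $(W_{\mathsf R})_-(W_{\mathsf R})_+^{-1}$, and the $(1,2)$ entry is $(W_{\mathsf L})_-^{-1}\,W\,(W_{\mathsf R})_+^{-1}$. At this point I would substitute the factorization $W=W_{\mathsf L}W_{\mathsf R}$ valid along $\gamma$, so that the $(1,2)$ entry collapses onto the same expression as the $(1,1)$ entry, and then invoke the constant-jump relations~\eqref{eq:salto0} to identify the $(1,1)$ (hence also the $(1,2)$) entry with $\mathsf C^{\mathsf L}$ and the $(2,2)$ entry with $\operatorname I$; this reproduces exactly $\begin{bsmallmatrix}\mathsf C^{\mathsf L} & \mathsf C^{\mathsf L}\\ \0 & \operatorname I\end{bsmallmatrix}$. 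For $Z^{\mathsf R}_n$ I would either run the mirror-image computation starting from the right jump in (RH2), or deduce it from the left case by means of the identity $\big(Y^{\mathsf L}_n\big)^{-1}=\begin{bsmallmatrix}\0 & \operatorname I\\ -\operatorname I & \0\end{bsmallmatrix}Y^{\mathsf R}_n\begin{bsmallmatrix}\0 & -\operatorname I\\ \operatorname I & \0\end{bsmallmatrix}$ recorded just before the theorem, which converts the left constant-jump condition into the right one after conjugating by $\begin{bsmallmatrix}\0 & -\operatorname I\\ \operatorname I & \0\end{bsmallmatrix}$ and inverting.

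The algebra is a short block manipulation; the one place where care is genuinely needed --- and which I expect to be the main obstacle --- is the bookkeeping of sides and of left versus right multiplication: one must pin down precisely which boundary values of $W_{\mathsf L}$ and $W_{\mathsf R}$ multiply to the density $W$ appearing in (RH2), and orient~\eqref{eq:salto0} consistently with that choice, so that the weight term in the jump cancels against $(W_{\mathsf R})_+^{-1}$ and leaves $\mathsf C^{\mathsf L}$ rather than its inverse, and so that the lower-right block indeed reduces to the identity. No analytic ingredient beyond what has already been used --- Plemelj's formula and the non-vanishing of $\det W_{\mathsf L}$, $\det W_{\mathsf R}$ --- is required.
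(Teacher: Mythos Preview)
Your proposal is correct and follows essentially the same route as the paper: start from the definition~\eqref{eq:zn1}, insert the jump of $Y^{\mathsf L}_n$ from (RH2), factor out $(Z^{\mathsf L}_n)_-$, and reduce the resulting block upper-triangular matrix via the factorization $W=W_{\mathsf L}W_{\mathsf R}$ and the monodromy relations~\eqref{eq:salto0}. Your write-up is in fact slightly more thorough than the paper's, which does not spell out part~(i) and does not flag the orientation bookkeeping you rightly identify as the only delicate point.
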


\begin{proof}
From the definition of \( Z^{\mathsf L}_n (z)\) we have
\begin{align*}
\big(Z^{\mathsf L}_n (z) \big)_+ = \big( Y^{\mathsf L}_n (z) \big)_+ 
\begin{bmatrix}
(W_{\mathsf L} (z))_+ & \0 \\ 
\0 & ( W_{\mathsf R}^{-1} (z))_+
 \end{bmatrix} , 
\end{align*}
and taking into account Theorem~\ref{teo:LRHP_Bessel} we obtain
\begin{align*}
\big(Z^{\mathsf L}_n (z) \big)_+ &
= \big( Y^{\mathsf L}_n (z) \big)_-
\begin{bmatrix} 
\operatorname I & (W_{\mathsf L} (z) W_{\mathsf R} (z))_+ \\ 
\0 & \operatorname I
 \end{bmatrix}
\begin{bmatrix} 
(W_{\mathsf L} (z))_+ & \0 \\ 
\0 & ( W_{\mathsf R}^{-1} (z))_+ 
 \end{bmatrix} \\
&
 =
\resizebox{.815\hsize}{!}{\(
\big( Y^{\mathsf L}_n (z) \big)_-
\begin{bmatrix} 
(W_{\mathsf L} (z))_- & \0 \\ 
\0 & ( W_{\mathsf R}^{-1} (z))_- 
 \end{bmatrix}
\begin{bmatrix} 
(W_{\mathsf L}^{-1} (z))_- & \0 \\ 
\0 & ( W_{\mathsf R} (z))_- 
 \end{bmatrix}
\begin{bmatrix} 
(W_{\mathsf L} (z))_+ & (W_{\mathsf L} (z))_+ \\ 
\0 & ( W_{\mathsf R}^{-1} (z))_+ 
 \end{bmatrix} \)}
 \\
&
= 
\big(Z^{\mathsf L}_n (z) \big)_- 
\begin{bmatrix} 
(W_{\mathsf L}^{-1} (z))_- (W_{\mathsf L} (z))_+ &
(W_{\mathsf L}^{-1} (z))_- (W_{\mathsf L} (z))_+ \\ 
\0 & 
W_{\mathsf R} (z)_- ( W_{\mathsf R}^{-1} (z))_+ 
 \end{bmatrix} ,
\end{align*}
as we wanted to prove.
\end{proof}

In parallel to the matrices \( Z^{\mathsf L}_n(z)\) and \( Z^{\mathsf R}_n(z)\), for each factorization we introduce what we call \emph{structure matrices} given in terms of the \emph{left}, respectively \emph{right}, logarithmic derivatives by,
\begin{align} \label{eq:Mn}
M^{\mathsf L}_n (z) & 
 \coloneqq \big(Z^{\mathsf L}_n\big)^{\prime} (z) \big(Z^{\mathsf L}_{n} (z)\big)^{-1},&
{M}^{\mathsf R}_n (z) & 
 \coloneqq \big ({Z}^{\mathsf R}_{n} (z)\big)^{-1} \big(Z^{\mathsf R}_n\big)^{\prime} (z) .
\end{align}
It is not difficult to see that
\begin{align} \label{MnLR}
{M}^{\mathsf R}_{n}(z) &
 =
\begin{bmatrix}
0 & \operatorname I \\ -\operatorname I & 0
 \end{bmatrix} 
M_n^{\mathsf L} (z)
\begin{bmatrix}
0 & \operatorname I \\ -\operatorname I & 0
 \end{bmatrix} , &
n \in \mathbb N ,
\end{align}
as well as, the following properties hold~(cf.~\cite{zbMATH07310648}),
\begin{enumerate}[\rm i)]

\item
the transfer matrices satisfy
\(
 T^{\mathsf L}_n (z)Z_n^{\mathsf L}(z) = Z^{\mathsf L}_{n+1}(z) 
 \),
\(
 {Z}^{\mathsf R}_n(z){T}^{\mathsf R}_n (z) =
 {Z}^{\mathsf R}_{n+1}(z)\), 
\( n \in \mathbb N \),

\item 
the zero curvature formulas holds for all \( n \in \mathbb N\),
\begin{align*}
\big( T^{\mathsf L}_n \big)^\prime (z) & 
= M^{\mathsf L}_{n+1} (z) T^{\mathsf L}_n(z) - T^{\mathsf L}_n (z) M^{\mathsf L}_{n} (z),
 &&
\big( T^{\mathsf R}_n \big)^\prime (z) =
T^{\mathsf R}_n (z) \, M^{\mathsf R}_{n+1} (z) 
- M^{\mathsf R}_{n} (z) T^{\mathsf R}_n (z) .
\end{align*}

\end{enumerate}
Now, we discuss the holomorphic properties of the structure matrices just introduced.

\begin{teo} \label{teo:mn_Bessel}
Let \( W \) be a regular Bessel matrix weight that admits a factorization~\( W (z) = W_{\mathsf L} (z) W_{\mathsf R} (z)\), where \( W_{\mathsf L}\) and \( W_{\mathsf R}\) satisfies~\eqref{eq:partial_Bessel_Pearson1} and~\eqref{eq:partial_Bessel_Pearson2}.
Then, the structure matrices~\( M^{\mathsf L}_n(z) \) and \( M^\mathsf R_n(z)\) are meromorphic on~\( \C \) for each \( n \in \N\), with a singularity at \( z=0\), which is at most a pole of order 2.
\end{teo}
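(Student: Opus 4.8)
The plan is to establish the claimed meromorphy by combining three ingredients: the Riemann--Hilbert characterization of $Y_n^{\mathsf L}$ and $Y_n^{\mathsf R}$ (Theorem~\ref{teo:LRHP_Bessel}), the constant-jump property of $Z_n^{\mathsf L}$ and $Z_n^{\mathsf R}$ established in the previous theorem, and the Pearson equations~\eqref{eq:partial_Bessel_Pearson1}--\eqref{eq:partial_Bessel_Pearson2}. By the symmetry relation~\eqref{MnLR} it suffices to treat $M_n^{\mathsf L}$. First I would observe that, since $(Z_n^{\mathsf L})_+ = (Z_n^{\mathsf L})_-\left[\begin{smallmatrix}\mathsf C^{\mathsf L} & \mathsf C^{\mathsf L}\\ \0 & \operatorname I\end{smallmatrix}\right]$ with a \emph{constant} jump matrix, the logarithmic derivative $M_n^{\mathsf L}(z) = (Z_n^{\mathsf L})'(z)\,(Z_n^{\mathsf L}(z))^{-1}$ has no jump across $\gamma$: differentiating the jump relation and multiplying by the inverse, the constant matrix cancels, so $(M_n^{\mathsf L})_+ = (M_n^{\mathsf L})_-$ on $\gamma\setminus\{0\}$. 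Hence $M_n^{\mathsf L}$ extends holomorphically across $\gamma\setminus\{0\}$, and is holomorphic on $\C\setminus\{0\}$ once we know $Z_n^{\mathsf L}(z)$ is invertible away from the origin — which follows from $\det Z_n^{\mathsf L}(z) = \det Y_n^{\mathsf L}(z)\cdot \det W_{\mathsf L}(z)\cdot \det W_{\mathsf R}^{-1}(z)$ together with the fact that $\det Y_n^{\mathsf L}$ is a nonzero constant (a consequence of (RH2)--(RH3), since the jump matrix is unipotent and the behavior at $\infty$ forces the Wronskian-type determinant to be $1$).

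Next I would compute $M_n^{\mathsf L}$ in terms of the Pearson data. Writing $Z_n^{\mathsf L} = Y_n^{\mathsf L}\,\mathcal W$ with $\mathcal W(z) = \operatorname{diag}(W_{\mathsf L}(z), W_{\mathsf R}^{-1}(z))$, the product rule gives
\begin{align*}
M_n^{\mathsf L}(z) = (Y_n^{\mathsf L})'(z)\,(Y_n^{\mathsf L}(z))^{-1} + Y_n^{\mathsf L}(z)\,\mathcal W'(z)\,\mathcal W(z)^{-1}\,(Y_n^{\mathsf L}(z))^{-1}.
\end{align*}
The Pearson equations say precisely that $\mathcal W'(z)\,\mathcal W(z)^{-1} = \frac{1}{z^2}\operatorname{diag}\big(h^{\mathsf L}(z),\,-h^{\mathsf R}(z)\big)$, with $h^{\mathsf L},h^{\mathsf R}$ entire; so the second summand is $\frac{1}{z^2}Y_n^{\mathsf L}(z)\operatorname{diag}(h^{\mathsf L},-h^{\mathsf R})(Y_n^{\mathsf L})^{-1}$, a function that is holomorphic on $\C\setminus\{0\}$ with at worst a pole at $z=0$ whose order is controlled by the order of vanishing required of $W_{\mathsf L},W_{\mathsf R}$ at $0$ — and since $z^2\mathcal W'\mathcal W^{-1}$ is entire, this term has a pole of order at most $2$ at the origin. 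For the first summand $(Y_n^{\mathsf L})'(Y_n^{\mathsf L})^{-1}$: away from $0$ it is holomorphic; near $\infty$ the asymptotics in (RH3) show it behaves like $\operatorname{diag}(n/z, -n/z)\cdot\operatorname I + \operatorname{O}(z^{-2})$, hence is bounded at $\infty$; and near $z=0$ the entries of $Y_n^{\mathsf L}$ have at worst the mild singularity recorded in (RH4) (with $z\,s_j^{\mathsf L}(z)\to \0$), so I would argue that $(Y_n^{\mathsf L})'(Y_n^{\mathsf L})^{-1}$ has at most a pole at $0$.

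The main obstacle is the local analysis at $z=0$: a priori the singularity of $(Y_n^{\mathsf L})'(Y_n^{\mathsf L})^{-1}$ there could be essential rather than polar, because the columns $s_1^{\mathsf L}, s_2^{\mathsf L}$ of $Y_n^{\mathsf L}$ may involve $\log z$ and $\Exp{-\lambda/z}$-type behavior inherited from the weight. The way around this is to use the constant-jump matrix $Z_n^{\mathsf L}$ rather than $Y_n^{\mathsf L}$ itself. Since $M_n^{\mathsf L}$ is single-valued and holomorphic on $\C\setminus\{0\}$ (first paragraph), the only remaining question is its growth as $z\to 0$; and from the formula above, $M_n^{\mathsf L}(z) = (Y_n^{\mathsf L})'(Y_n^{\mathsf L})^{-1} + \frac{1}{z^2}Y_n^{\mathsf L}\operatorname{diag}(h^{\mathsf L},-h^{\mathsf R})(Y_n^{\mathsf L})^{-1}$. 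I would use that $Y_n^{\mathsf L} = Z_n^{\mathsf L}\mathcal W^{-1}$ to rewrite $(Y_n^{\mathsf L})'(Y_n^{\mathsf L})^{-1} = M_n^{\mathsf L} - Y_n^{\mathsf L}\mathcal W'\mathcal W^{-1}(Y_n^{\mathsf L})^{-1}$ — but this is circular, so instead I would establish directly that $M_n^{\mathsf L}(z)$ grows at most polynomially as $z\to 0$ by estimating each block: the entries of $Y_n^{\mathsf L}$ and of $(Y_n^{\mathsf L})^{-1} = \left[\begin{smallmatrix}\0 & \operatorname I\\ -\operatorname I&\0\end{smallmatrix}\right]Y_n^{\mathsf R}\left[\begin{smallmatrix}\0&-\operatorname I\\\operatorname I&\0\end{smallmatrix}\right]$ are, by (RH4) for both $Y_n^{\mathsf L}$ and $Y_n^{\mathsf R}$, either $\operatorname{O}(1)$ or of the form $s_j(z)$ with $z s_j(z)\to\0$; combined with the $z^{-2}$ prefactor and the fact that products/derivatives of such functions against the entire matrices $h^{\mathsf L},h^{\mathsf R}$ remain of controlled order, one gets that $z^2 M_n^{\mathsf L}(z)$ stays bounded near $0$. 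Since a single-valued holomorphic function on a punctured disk that is $\operatorname{O}(z^{-2})$ has at most a pole of order $2$ at the puncture, this yields the claim; the analogous statement for $M_n^{\mathsf R}$ follows from~\eqref{MnLR}.
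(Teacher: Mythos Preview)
Your strategy is the same as the paper's: use the constant-jump property of \(Z_n^{\mathsf L}\) to see that \(M_n^{\mathsf L}\) has no jump across \(\gamma\) (hence is holomorphic on \(\C\setminus\{0\}\)), then write
\[
M_n^{\mathsf L}(z)=(Y_n^{\mathsf L})'(z)\,(Y_n^{\mathsf L}(z))^{-1}+\frac{1}{z^{2}}\,Y_n^{\mathsf L}(z)
\begin{bmatrix}h^{\mathsf L}(z)&\0\\\0&-h^{\mathsf R}(z)\end{bmatrix}
(Y_n^{\mathsf L}(z))^{-1}
\]
and control each term as \(z\to 0\) using (RH4). That is exactly the route the paper takes.

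The gap is in the local analysis at \(z=0\), which you yourself flag as ``the main obstacle'' but then do not actually resolve. From (RH4) you know the entries of \(Y_n^{\mathsf L}\) and \((Y_n^{\mathsf L})^{-1}\) are either \(\operatorname O(1)\) or \(o(1/z)\). That controls the second summand (multiplied by \(z^{3}\) it tends to \(\0\)), but it says nothing about the \emph{derivatives} appearing in the first summand. Your sentence ``products/derivatives of such functions \ldots\ remain of controlled order'' is precisely the assertion that needs proof, not a step in one: a priori the derivative of a Cauchy transform of a weight with factors \(z^{p}\log^{q}z\,\Exp{-\lambda/z}\) could blow up arbitrarily fast at \(0\). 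The paper supplies this missing piece by an integration-by-parts computation on the Cauchy integral: writing \(g(z)=\int_\gamma \frac{f(t)}{t-z}\frac{\d t}{2\pi\ii}\), one shows
\[
z^{2}g'(z)=-z\!\int_\gamma\!\frac{f(t)}{t-z}\frac{\d t}{2\pi\ii}
+\int_\gamma\!\frac{t^{2}f'(t)}{t-z}\frac{\d t}{2\pi\ii}
+\int_\gamma\!\frac{t f(t)}{t-z}\frac{\d t}{2\pi\ii},
\]
and since \(t^{2}f'(t)\) and \(t f(t)\) are again in the admissible class, each Cauchy transform on the right is \(o(1/z)\); hence \(\lim_{z\to 0}z^{3}g'(z)=0\). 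This gives \(\lim_{z\to 0}z^{3}(Y_n^{\mathsf L})'(Y_n^{\mathsf L})^{-1}=\0\) and therefore \(\lim_{z\to 0}z^{3}M_n^{\mathsf L}(z)=\0\), forcing the pole order to be at most \(2\).

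A minor quantitative point: the (RH4) information alone does \emph{not} give ``\(z^{2}M_n^{\mathsf L}(z)\) bounded'' as you claim; the block products produce \(o(1/z)\) entries, so after the \(z^{-2}\) prefactor you only get \(z^{3}M_n^{\mathsf L}(z)\to\0\). That is still enough for a pole of order at most \(2\), but your stated bound is stronger than what the argument supports.
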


\begin{proof}
Let us prove the statement for \( M^{\mathsf L}_n(z) \).
The matrix function \( M^{\mathsf L}_n(z) \) is holomorphic in~\( \C \setminus
\gamma
\) by definition,~cf.~\eqref{eq:Mn}.
Due to the fact that \( Z^{\mathsf L}_n(z)\) has a constant jump on
 \( \gamma \),~cf.~\eqref{eq:salto0}, the matrix function
 \( 
\big(Z^{\mathsf L}_n\big)^{\prime} 
\) has the same constant jump on \( \gamma \), so~that the matrix \( M^{\mathsf L}_n \) has no jump on~\( \gamma \), and it follows that
at \( z=0\), \( M^{\mathsf L}_n \) has an isolated singularity. 

From~\eqref{eq:zn1}~and~\eqref{eq:Mn} it holds
\begin{align}
\label{eq:numerar_Bessel}
M^{\mathsf L}_n (z)
& 
= \big(Y^{\mathsf L}_n\big)^{\prime} (z)\big( {Y^{\mathsf L}_n} (z) \big)^{-1} 
+ \frac{1}{z^2} Y^{\mathsf L}_n (z)
 \begin{bmatrix}
h^{\mathsf L}(z) & \0 \\
\0 & -h^{\mathsf R} (z)
 \end{bmatrix}
\big( {Y^{\mathsf L}_n} (z)\big)^{-1}, 
\end{align}
where
 \( Y^{\mathsf L}_n\) is given in~\eqref{eq:ynl}. 
Each entry of the matrix \( Q^{\mathsf L}_{n} (z)\) is the Cauchy transform of certain function,~\( f \), of type
\begin{align*}
f(z)=\sum_{j \in I} \varphi_{j}(z) z^{p_{j}} \log^{q_{j}} (z)\Exp{-\frac{\lambda_j}{z}} ,
\end{align*}
where \( \varphi_{j} (z)\) is, for each \( j \in I\), an entire function with \( \Re(\lambda_{j})\geq 0\), 
\( q_j \in \mathbb{N}\), and \( I \) is a finite set of indices.
It's clear that
\( 
\lim_{z \to 0} z f(z)= 0 
\).
By~\cite[\S 8.3-8.6]{zbMATH03227290} and~\cite{zbMATH03277891}, we deduce that the Cauchy transform of \( f \), i.e.
\( 
g(z) = \int_{\gamma} \frac{f(t)}{t - z} \frac{\d t} {2\pi\ii}
\),
has the same~property,
\( 
\lim_{z \to 0} 
z g(z)
 = 0 \).
We can also see that 
 \( 
\lim_{z \to 0} z^3 g^{\prime} (z)= 0
\).
Indeed,
\begin{align*}
z^2 g^{\prime} (z)&= 
\int_\gamma \frac{z^2 f(t)}{(t-z)^2} \, \frac{\d t} {2\pi\ii} = \int_\gamma \frac{(z-t)(z+t) f(t)}{(t-z)^2} \, \frac{\d t} {2\pi\ii} + \int_\gamma \frac{t^2 f(t)}{(t-z)^2} \, \frac{\d t} {2\pi\ii} ,\\
&= -\int_\gamma \frac{(z+t)f(t)}{t-z} \, \frac{\d t} {2\pi\ii} - \frac{1} {2\pi\ii}\frac{t^2 f(t)}{t - z} \bigg|_{\partial \gamma} 
+ \int_\gamma \frac{(t^2f(t))^{\prime}}{t-z} \, \frac{\d t} {2\pi\ii} \\
&= -z \int_\gamma \frac{f(t)}{t-z} \, \frac{\d t} {2\pi\ii} 
+ \int_\gamma \frac{t^2 f^{\prime}(t)}{t-z} \, \frac{\d t} {2\pi\ii} + \int_\gamma \frac{t f(t)}{t-z} \, \frac{\d t} {2\pi\ii} ,
\end{align*}
taking into account the boundary conditions,
\( \frac{t^2 f(t)}{t - z} \Big|_{\partial \gamma} = 0 \).
From this, and using the definition of \( f \) we get that \( t^2 f^{\prime}(t)\) and \( t f(t)\) are functions in the class of \( f \).
Furthermore,~the Cauchy transform of \( f \) satisfy the same property
\( 
\lim_{z\to 0} z g(z)=0\), consequently,
 \(
\lim_{z \to 0} z^3 g^{\prime} (z) 
= 0\).
From~these considerations it follows,
\begin{align*}
\big(Y^{\mathsf L}_n\big)^{\prime} (z)
 &
= \begin{bmatrix}
\operatorname{O} (1) & r^{\mathsf L}_1 (z) \\[.05cm]
\operatorname{O} (1) & r^{\mathsf L}_2 (z)
\end{bmatrix} ,
 & 
\big( {Y^{\mathsf L}_n(z)}\big)^{-1} 
 &
= \begin{bmatrix}
 r^{\mathsf L}_3(z) & r^{\mathsf L}_4 (z) \\[.05cm]
\operatorname{O} (1) &\operatorname{O} (1)
\end{bmatrix} , & 
z &
\to 0,
\end{align*}
where 
 \(
\lim_{z \to 0} z^2 r^{\mathsf L}_i(z) = \0 \), 
for \( i = 1,2\), and 
 \(
\lim_{z \to 0} z^2 r^{\mathsf R}_i(z) = \0 \),
for \( i = 3,4\), so it holds~that
\begin{align*}
\lim_{z \to 0} 
z^3 \big(Y^{\mathsf L}_n\big)^{\prime} (z) \big({Y^{\mathsf L}_n}\big)^{-1} 
= \lim_{z \to 0} z^3
\begin{bmatrix}
 r^{\mathsf L}_1(z) + r^{\mathsf L}_3(z)& r^{\mathsf L}_1(z) + r^{\mathsf L}_4(z) \\[0.05cm]
 r^{\mathsf L}_2(z) + r^{\mathsf L}_3(z)& r^{\mathsf L}_2(z) + r^{\mathsf L}_4(z)
\end{bmatrix}
=\0.
\end{align*}
Similar considerations leads us to
\begin{align*}
\lim_{z \to 0} z \, Y^{\mathsf L}_n(z)
\begin{bmatrix}
h^{\mathsf L}(z) & \0 \\
\0 & -h^{\mathsf R}(z)
\end{bmatrix}
\big( {Y^{\mathsf L}_n(z)}\big)^{-1} =\0 ,
\end{align*}
so we obtain that
 \(
\lim_{z\to 0} z^3 M^{\mathsf L}_n(z) =\0 \),
and hence the matrix function \( M^{\mathsf L}_n(z) \) has at most a double pole on~\( z=0\).
By analogous arguments we get the results for \( M_n^{\mathsf R}\).
\end{proof}

\section{Differential Equations: Insights and Applications}\label{sec:3}

Our objective is to derive differential equations satisfied by the biorthogonal matrix polynomials associated to regular Bessel type matrices of weights.

Let us define a new matrix functions,
\(
\widetilde{M}^\mathsf L_n (z) = z^2{M}^\mathsf L_n (z) 
\),
\(
\widetilde{M}^\mathsf R_n (z) = z^2{M}^\mathsf R_n (z) 
\),
then \( \widetilde{M}^\mathsf L_n (z)\) and \( \widetilde{M}^\mathsf R_n (z)\) are matrices of entire functions, cf. Theorem~\ref{teo:mn_Bessel}. 

\begin{pro} \label{prop:mn_Bessel}
Let \( W \) be a regular Bessel matrix weight
that admits a 
 factorization~\( W (z) = W_{\mathsf L} (z) W_{\mathsf R} (z)\), where \( W_{\mathsf L}\) and \( W_{\mathsf R}\) satisfies~\eqref{eq:partial_Bessel_Pearson1} and~\eqref{eq:partial_Bessel_Pearson2}.
Then, for all \( n \in \mathbb N \), we have,
\begin{align}
\label{eq:firstodeYnL_Bessel}
z^2 \big(Y^{\mathsf L}_n\big)^{\prime} (z) + Y^{\mathsf L}_n (z)
\begin{bmatrix}
h^{\mathsf L} (z) & \0 \\ \0 & - h^{\mathsf R} (z)
 \end{bmatrix}
&
= \widetilde{M}^{\mathsf L}_n (z) Y^{\mathsf L}_n (z)
 \\
\label{eq:firstodeYnR_Bessel}
z^2 \big(Y^{\mathsf R}_n\big)^{\prime} (z)
 +
\begin{bmatrix}
h^{\mathsf R} (z) & \0 \\ \0 & - h^{\mathsf L} (z)
 \end{bmatrix}
Y^{\mathsf R}_n (z)
&
= Y^{\mathsf R}_n (z) \widetilde{M}^{\mathsf R}_n (z) .
\end{align}
\end{pro}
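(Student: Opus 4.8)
The plan is to read both identities off directly from the definition of the structure matrices in~\eqref{eq:Mn}, the factorization~\eqref{eq:zn1}, and the partial Pearson equations~\eqref{eq:partial_Bessel_Pearson1}--\eqref{eq:partial_Bessel_Pearson2}, after multiplying by $z^2$ to absorb the pole at the origin. Concretely, for the left identity I would start from~\eqref{eq:numerar_Bessel}, already derived in the proof of Theorem~\ref{teo:mn_Bessel},
\[
M^{\mathsf L}_n (z)=\big(Y^{\mathsf L}_n\big)^{\prime}(z)\big(Y^{\mathsf L}_n(z)\big)^{-1}+\frac1{z^2}\,Y^{\mathsf L}_n(z)\begin{bmatrix}h^{\mathsf L}(z)&\0\\\0&-h^{\mathsf R}(z)\end{bmatrix}\big(Y^{\mathsf L}_n(z)\big)^{-1},
\]
which itself comes from $Z^{\mathsf L}_n=Y^{\mathsf L}_n\,\diag(W_{\mathsf L},W_{\mathsf R}^{-1})$, the product rule, and the logarithmic-derivative identities $W_{\mathsf L}^{\prime}W_{\mathsf L}^{-1}=z^{-2}h^{\mathsf L}$ and $(W_{\mathsf R}^{-1})^{\prime}W_{\mathsf R}=-z^{-2}h^{\mathsf R}$ provided by~\eqref{eq:partial_Bessel_Pearson1}--\eqref{eq:partial_Bessel_Pearson2}. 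Multiplying by $z^2$, recognising $\widetilde M^{\mathsf L}_n=z^2M^{\mathsf L}_n$ on the left-hand side, and then multiplying on the right by $Y^{\mathsf L}_n(z)$ (which is invertible on $\C\setminus\gamma$) yields exactly~\eqref{eq:firstodeYnL_Bessel}; since $\widetilde M^{\mathsf L}_n$ is entire by Theorem~\ref{teo:mn_Bessel}, both sides are holomorphic on $\C\setminus\gamma$ with matching boundary values on $\gamma$, so the identity is the asserted one.

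For the right identity I would run the mirror computation starting from $Z^{\mathsf R}_n=\diag(W_{\mathsf R},W_{\mathsf L}^{-1})\,Y^{\mathsf R}_n$ and $M^{\mathsf R}_n=\big(Z^{\mathsf R}_n(z)\big)^{-1}\big(Z^{\mathsf R}_n\big)^{\prime}(z)$, which after the product rule and the identities $W_{\mathsf R}^{-1}W_{\mathsf R}^{\prime}=z^{-2}h^{\mathsf R}$, $W_{\mathsf L}(W_{\mathsf L}^{-1})^{\prime}=-z^{-2}h^{\mathsf L}$ gives
\[
M^{\mathsf R}_n(z)=\big(Y^{\mathsf R}_n(z)\big)^{-1}\big(Y^{\mathsf R}_n\big)^{\prime}(z)+\frac1{z^2}\,\big(Y^{\mathsf R}_n(z)\big)^{-1}\begin{bmatrix}h^{\mathsf R}(z)&\0\\\0&-h^{\mathsf L}(z)\end{bmatrix}Y^{\mathsf R}_n(z).
\]
Multiplying by $z^2$ and then on the left by $Y^{\mathsf R}_n(z)$ produces~\eqref{eq:firstodeYnR_Bessel}. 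Alternatively, one can deduce the right identity from the left one without repeating the computation, by using the symmetry~\eqref{MnLR} between $M^{\mathsf R}_n$ and $M^{\mathsf L}_n$ together with the relation between $\big(Y^{\mathsf L}_n\big)^{-1}$ and $Y^{\mathsf R}_n$ recorded at the end of~\S\ref{sec:1}.

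I do not expect a genuine obstacle here: the argument is bookkeeping with block-diagonal logarithmic derivatives plus the already-proven entirety of $\widetilde M^{\mathsf L}_n$ and $\widetilde M^{\mathsf R}_n$. The only point needing care is non-commutativity: when differentiating $W_{\mathsf L}^{-1}$ and $W_{\mathsf R}^{-1}$ one must keep $h^{\mathsf L}$ and $h^{\mathsf R}$ on the correct side, and it is precisely~\eqref{eq:partial_Bessel_Pearson1}--\eqref{eq:partial_Bessel_Pearson2} that make the scalar-type factors $W_{\mathsf L}$, $W_{\mathsf R}$ cancel, leaving the entire coefficient matrices $\diag(h^{\mathsf L},-h^{\mathsf R})$ and $\diag(h^{\mathsf R},-h^{\mathsf L})$ on the correct sides of $Y^{\mathsf L}_n$ and $Y^{\mathsf R}_n$ respectively.
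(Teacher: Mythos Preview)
Your proposal is correct and follows exactly the route the paper takes: the paper's proof simply states that both identities follow immediately from the definition of \(M^{\mathsf L}_n\) and \(M^{\mathsf R}_n\) in~\eqref{eq:Mn}, and your argument just unpacks this via~\eqref{eq:numerar_Bessel} and the mirror computation for the right case. The only difference is that you spell out the bookkeeping (product rule, logarithmic derivatives of \(W_{\mathsf L}\), \(W_{\mathsf R}^{-1}\), multiplication by \(z^2\) and by \(Y_n\)) that the paper leaves implicit.
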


\begin{proof}
Equations~\eqref{eq:firstodeYnL_Bessel} and~\eqref{eq:firstodeYnR_Bessel} follows immediately from the definition of the matrices~\( M^{\mathsf L}_n(z)\) and \( M^{\mathsf R}_n(z)\) in~\eqref{eq:Mn}. 
\end{proof}

Now, we introduce the Miura-like map \( \mathcal B \), 
 \(
\mathcal B (F(z)) = F^{\prime}(z)+\frac{F^2(z)}{z^2}
\).

\begin{pro} \label{prop:mnn_Bessel}
Let \( W \) be a regular Bessel matrix weight 
that admits a 
factorization~\( W (z) = W_{\mathsf L} (z) W_{\mathsf R} (z)\), where \( W_{\mathsf L}\) and \( W_{\mathsf R}\) satisfies~\eqref{eq:partial_Bessel_Pearson1} and~\eqref{eq:partial_Bessel_Pearson2}.
Then,
\begin{align} \label{eq:edo1_Bessel}
\resizebox{.9095\hsize}{!}{\( 
z^2 \big(Y^\mathsf L_n\big)^{\prime\prime} 
+ \big(Y^\mathsf L_n\big)^{\prime} 
\begin{bmatrix} 
 2 h^{\mathsf L}+ 2z \operatorname I& \0 \\ 
 \0 & - 2 h^{\mathsf R} +2z \operatorname I
 \end{bmatrix}
 + Y_n^\mathsf L (z)
 \begin{bmatrix}
 \mathcal B (h^{\mathsf L}) & \0 \\ 
 \0 &\mathcal B (- h^{\mathsf R}) 
 \end{bmatrix}
= \mathcal B ( \widetilde{M}^\mathsf L_n )Y^\mathsf L_n 
\)} ,
 \\
\label{eq:edo2_Bessel}
\resizebox{.9095\hsize}{!}{\(
z^2 \big(Y^\mathsf R_n\big)^{\prime\prime} 
+
\begin{bmatrix} 
2 h^{\mathsf R} + 2z \operatorname I & \0 \\ 
 \0 & - 2 h^{\mathsf L} + 2z \operatorname I 
 \end{bmatrix} 
 \big(Y^\mathsf R_n\big)^{\prime} 
 + 
\begin{bmatrix} 
 \mathcal B (h^{\mathsf R}) & \0 \\ 
 \0 & \mathcal B (-h^{\mathsf L}) 
 \end{bmatrix}
Y_n{^\mathsf R} (z)
 = 
Y^\mathsf R_n \mathcal B (\widetilde{M}^\mathsf R_n )
\)}.
 \end{align}
\end{pro}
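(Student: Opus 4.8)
The plan is to differentiate the first-order relations of Proposition~\ref{prop:mn_Bessel} once more and then eliminate the surviving first-derivative term of $Y_n^{\mathsf L}$ (respectively $Y_n^{\mathsf R}$) by feeding the first-order relation back in. Concretely, I would start from~\eqref{eq:firstodeYnL_Bessel}, written as
\begin{align*}
z^2 \big(Y^{\mathsf L}_n\big)^{\prime} (z) = \widetilde{M}^{\mathsf L}_n (z) Y^{\mathsf L}_n (z) - Y^{\mathsf L}_n (z)\, \mathcal H^{\mathsf L}(z), \qquad \mathcal H^{\mathsf L}(z) \coloneqq \begin{bmatrix} h^{\mathsf L}(z) & \0 \\ \0 & -h^{\mathsf R}(z)\end{bmatrix},
\end{align*}
and differentiate both sides. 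On the left one gets $2z\big(Y^{\mathsf L}_n\big)^{\prime} + z^2\big(Y^{\mathsf L}_n\big)^{\prime\prime}$; on the right one gets $\big(\widetilde{M}^{\mathsf L}_n\big)^{\prime}Y^{\mathsf L}_n + \widetilde{M}^{\mathsf L}_n\big(Y^{\mathsf L}_n\big)^{\prime} - \big(Y^{\mathsf L}_n\big)^{\prime}\mathcal H^{\mathsf L} - Y^{\mathsf L}_n \big(\mathcal H^{\mathsf L}\big)^{\prime}$. Note $\big(\mathcal H^{\mathsf L}\big)^{\prime}$ is the diagonal block matrix with entries $(h^{\mathsf L})'$ and $-(h^{\mathsf R})'$, which is exactly the matrix appearing once in the definition of $\mathcal B$ applied blockwise.

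The next step is the elimination. Every remaining occurrence of $\big(Y^{\mathsf L}_n\big)^{\prime}$ — the $2z\big(Y^{\mathsf L}_n\big)^{\prime}$ on the left, and the $\widetilde{M}^{\mathsf L}_n\big(Y^{\mathsf L}_n\big)^{\prime}$ and $-\big(Y^{\mathsf L}_n\big)^{\prime}\mathcal H^{\mathsf L}$ on the right — I would multiply through appropriately by $z^2$ and substitute $z^2\big(Y^{\mathsf L}_n\big)^{\prime} = \widetilde{M}^{\mathsf L}_n Y^{\mathsf L}_n - Y^{\mathsf L}_n \mathcal H^{\mathsf L}$ once more. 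After collecting terms, the coefficient of $Y^{\mathsf L}_n$ on the right-hand side assembles into $\big(\widetilde{M}^{\mathsf L}_n\big)^{\prime} + z^{-2}\big(\widetilde{M}^{\mathsf L}_n\big)^2 = \mathcal B\big(\widetilde{M}^{\mathsf L}_n\big)$, while on the left-hand side the terms involving $Y^{\mathsf L}_n$ directly (from substituting into $2z(Y^{\mathsf L}_n)'$ and reorganizing) combine with $\big(\mathcal H^{\mathsf L}\big)^{\prime}$ and $z^{-2}\big(\mathcal H^{\mathsf L}\big)^2$ to produce the block matrix $\bigl[\begin{smallmatrix}\mathcal B(h^{\mathsf L}) & \0\\ \0 & \mathcal B(-h^{\mathsf R})\end{smallmatrix}\bigr]$ multiplying $Y^{\mathsf L}_n$ from the right, plus the cross-term $\big(Y^{\mathsf L}_n\big)^{\prime}\bigl[\begin{smallmatrix}2h^{\mathsf L}+2z\operatorname I & \0\\ \0 & -2h^{\mathsf R}+2z\operatorname I\end{smallmatrix}\bigr]$; that last term is where the $2z\operatorname I$ comes from (the $+2z\operatorname I$ from the Leibniz term $2z(Y_n^{\mathsf L})'$ and the $2h^{\mathsf L}$, $-2h^{\mathsf R}$ from the two places $\mathcal H^{\mathsf L}$ is reinserted against a surviving derivative). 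This is exactly~\eqref{eq:edo1_Bessel}. Equation~\eqref{eq:edo2_Bessel} follows by the completely parallel computation starting from~\eqref{eq:firstodeYnR_Bessel}, or alternatively by applying the conjugation~\eqref{MnLR} together with the symmetry between left and right objects; I would simply remark that the right-hand case is obtained "in a similar way," transposing the roles of $h^{\mathsf L}$ and $h^{\mathsf R}$ and of left/right multiplication.

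The only genuinely delicate point is bookkeeping: one must be careful that $\widetilde{M}^{\mathsf L}_n$ and the block matrix $\mathcal H^{\mathsf L}$ do \emph{not} commute with $Y^{\mathsf L}_n$ or with each other, so the non-Abelian ordering of every factor has to be tracked through the two substitutions, and the map $\mathcal B$ must be read as $\mathcal B(F) = F' + z^{-2}F^2$ with the square in that fixed order. Since $\widetilde{M}^{\mathsf L}_n$ is entire by Theorem~\ref{teo:mn_Bessel}, $\mathcal B\big(\widetilde{M}^{\mathsf L}_n\big)$ is a well-defined matrix of meromorphic functions with at worst a double pole at $z=0$ absorbed by the $z^2$ already present, so no convergence or regularity subtlety arises; the whole proof is a two-line differentiate-and-substitute once the ordering is respected. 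I expect the referee-facing obstacle to be purely presentational — making the cancellation of the $\mathcal H^{\mathsf L}$-dependent terms transparent — rather than mathematical.
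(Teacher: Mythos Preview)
Your approach is correct and leads to~\eqref{eq:edo1_Bessel}, though the narrative is slightly garbled: you say you will substitute for \emph{all} three occurrences of \((Y_n^{\mathsf L})'\), but then correctly observe that the \(2z(Y_n^{\mathsf L})'\) term survives to supply the \(2z\operatorname I\) block. In fact you only substitute for \(\widetilde M_n^{\mathsf L}(Y_n^{\mathsf L})'\), which produces \(\mathcal B(\widetilde M_n^{\mathsf L})Y_n^{\mathsf L}\) together with the unwanted cross-term \(-z^{-2}\widetilde M_n^{\mathsf L}Y_n^{\mathsf L}\mathcal H^{\mathsf L}\); rewriting \(z^{-2}\widetilde M_n^{\mathsf L}Y_n^{\mathsf L}=(Y_n^{\mathsf L})'+z^{-2}Y_n^{\mathsf L}\mathcal H^{\mathsf L}\) and moving the result to the left then yields the second copy of \((Y_n^{\mathsf L})'\mathcal H^{\mathsf L}\) and the \(z^{-2}(\mathcal H^{\mathsf L})^2\) needed for \(\mathcal B(\pm h)\). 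So the computation closes exactly as you predict.

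The paper organizes the same calculation differently: it works at the level of the constant-jump matrix \(Z_n^{\mathsf L}\), where the relation \((Z_n^{\mathsf L})'=M_n^{\mathsf L}Z_n^{\mathsf L}\) has no inhomogeneous \(\mathcal H^{\mathsf L}\) term. Differentiating that gives \(z^2(Z_n^{\mathsf L})''(Z_n^{\mathsf L})^{-1}+2zM_n^{\mathsf L}=\mathcal B(\widetilde M_n^{\mathsf L})\) in one line, and the remaining work is to expand \((Z_n^{\mathsf L})''\) via \(Z_n^{\mathsf L}=Y_n^{\mathsf L}\operatorname{diag}(W_{\mathsf L},W_{\mathsf R}^{-1})\), which requires computing \(z^2 W_{\mathsf L}''W_{\mathsf L}^{-1}\) and \(z^2(W_{\mathsf R}^{-1})''W_{\mathsf R}\) from the Pearson equations. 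Your route avoids those auxiliary weight computations and the passage through \(Z_n^{\mathsf L}\), at the price of a slightly more delicate substitution-and-cancel step with the cross-term; the paper's route makes the appearance of \(\mathcal B(\widetilde M_n^{\mathsf L})\) immediate but pushes the bookkeeping into the Leibniz expansion of \((Z_n^{\mathsf L})''\). Both are short and neither is deeper than the other.
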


\begin{proof}
Differentiating in~\eqref{eq:Mn} we get
\(
\big(Z_{n}^{\mathsf L}\big)^{\prime \prime}\big(Z_{n}^{\mathsf L}\big)^{-1}=\frac{\big(\widetilde{M}_{n}^{\mathsf L}\big)^{\prime}}{z^2}-2\frac{\widetilde{M}_{n}^{\mathsf L}}{z^{3}}+\frac{\big(\widetilde{M}_{n}^{\mathsf L}\big)^{2}}{z^{4}}
\),
 or equivalently
\( 
z^2\big(Z_{n}^{\mathsf L}\big)^{\prime \prime}\big(Z_{n}^{\mathsf L}\big)^{-1}+ 2zM_n^{\mathsf L}
= \mathcal B (\widetilde{M}^\mathsf L_n ) \).
From~\eqref{eq:firstodeYnL_Bessel}, we get
\begin{align*}
2zM_n^{\mathsf L}=2z\big(Y_n^{\mathsf L}\big)^\prime{(Y_n^{\mathsf L})}^{-1} + \frac{2}{z} Y_n^{\mathsf L}
\begin{bmatrix}
h^{\mathsf L}&\0
 \\
\0 & -h^{\mathsf R}
\end{bmatrix}
{(Y_n^{\mathsf L})}^{-1} .
\end{align*}
Furthermore, \eqref{eq:partial_Bessel_Pearson1} and \eqref{eq:partial_Bessel_Pearson2} leads, respectively, to
\(
z^2{\big(W_{\mathsf L}\big)}^{\prime\prime}{\big(W_{\mathsf L}\big)}^{-1}=\frac{{(h^{\mathsf L})}^2}{z^2}-\frac{2}{z}h^{\mathsf L}+{(h^{\mathsf L})}^\prime
\)
and
\(
z^2{\big((W_{\mathsf R})^{-1}\big)}^{\prime\prime}W_{\mathsf R}=\frac{({h^{\mathsf R})}^2}{z^2}+\frac{2}{z}h^{\mathsf R}-{(h^{\mathsf R})}^\prime 
\).
Differentiating again~\eqref{eq:Mn} and replacing~\( Z_{n}^{\mathsf L}\) from~\eqref{eq:zn1}
\begin{multline*}
z^2\big(Z_{n}^{\mathsf L}\big)^{\prime \prime}{\big(Z_{n}^{\mathsf L}\big)}^{-1}=z^2{(Y_n^{\mathsf L})}^{\prime\prime}Y_n^{\mathsf L}+{\big(Y_n^{\mathsf L}\big)}^\prime
\begin{bmatrix}
2h^{\mathsf L}& \0 \\
\0 & -2h^{\mathsf R}
\end{bmatrix}
{(Y_n^{\mathsf L})}^{-1} \\
+Y_n^{\mathsf L}
\begin{bmatrix}
{z^2\big(W_{\mathsf L}\big)}^{\prime\prime}{\big(W_{\mathsf L}\big)}^{-1} & \0 \\
\0 & {z^2\big((W_{\mathsf R})^{-1}\big)}^{\prime\prime}W_{\mathsf R} 
\end{bmatrix}
{(Y_n^{\mathsf L})}^{-1}.
\end{multline*}
Finally, by considering equations above, we get the stated result~\eqref{eq:edo1_Bessel}. The equation~\eqref{eq:edo2_Bessel} follows in a similar way from definition of~\( M_n^{\mathsf R}\) in~\eqref{eq:Mn}.
\end{proof}

First of all we say that a weight matrix is semiclassical when the weight matrix satisfy a generalized Pearson equation of type
\eqref{eq:Bessel_Pearson}, with \( h^\mathsf L \) and \( h^\mathsf R \) are functions of polynomial type.
The~case when \( h^\mathsf L \) and \( h^\mathsf R \) are polynomials of degree one is of classical type.

\begin{pro}
Let \( W \) be a regular Bessel matrix weight
that admits a 
factorization
\( W (z) = W_{\mathsf L} (z) W_{\mathsf R} (z)\), where \( W_{\mathsf L}\) and \( W_{\mathsf R}\) satisfies~\eqref{eq:partial_Bessel_Pearson1} and~\eqref{eq:partial_Bessel_Pearson2}.
If \( h^{\mathsf L} (z)=A^{\mathsf L} z+B^{\mathsf L}\)
and \( h^{\mathsf R}(z)=A^{\mathsf R} z+B^{\mathsf R} \), 
then the left and right fundamental matrices are, for each \(n\in \mathbb N \), given by,
\begin{align}
\label{generalMnL_Bessel}
 \widetilde{M}^{\mathsf L}_n (z) & = 
\resizebox{.805\hsize}{!}{\(
\begin{bmatrix} 
\big(A^{\mathsf L}+n \operatorname I \big) z+\big[p^1_{\mathsf L,n},A^\mathsf L\big] - p^1_{\mathsf L,n} + B^{\mathsf L}
 &
A^{\mathsf L} C_n^{-1} + C_n^{-1} A^{\mathsf R}+(2n+1)C_{n}^{-1}
 \\
-C_{n-1} A^{\mathsf L} -A^{\mathsf R} C_{n-1}-(2n-1)C_{n-1}
&
-\big(n \operatorname I +A^{\mathsf R}\big) z+\big[p^1_{\mathsf R,n},A^\mathsf R\big] + p^1_{\mathsf R,n} - B^{\mathsf R}
 \end{bmatrix}
 \)},
 \\
\label{generalMnR_Bessel}
\widetilde{M}^{\mathsf R}_n (z) & =
\resizebox{.805\hsize}{!}{\(
\begin{bmatrix}
\big(n \operatorname I +A^{\mathsf R}\big) z-\big[p^1_{\mathsf R,n},A^\mathsf R\big] - p^1_{\mathsf R,n} + B^{\mathsf R} &-C_{n-1} A^{\mathsf L} -A^{\mathsf R} C_{n-1}-(2n-1)C_{n-1} \\
A^{\mathsf L} C_n^{-1} + C_n^{-1} A^{\mathsf R}+(2n+1)C_{n}^{-1}
 & 
-\big(A^{\mathsf L}+n \operatorname I \big) z -\big[p^1_{\mathsf L,n},A^\mathsf L\big] + p^1_{\mathsf L,n} - B^{\mathsf L}
\end{bmatrix}
\)} .
\end{align} 
\end{pro}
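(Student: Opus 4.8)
The plan is to extract \(\widetilde{M}^{\mathsf L}_n\) directly from the first--order relation~\eqref{eq:firstodeYnL_Bessel} of Proposition~\ref{prop:mn_Bessel} by matching the behaviour at \(z=\infty\), and then to obtain \(\widetilde{M}^{\mathsf R}_n\) for free from the conjugation~\eqref{MnLR}. By Theorem~\ref{teo:mn_Bessel}, \(\widetilde{M}^{\mathsf L}_n=z^2M^{\mathsf L}_n\) is a matrix of entire functions, and solving~\eqref{eq:firstodeYnL_Bessel} for it gives
\[
\widetilde{M}^{\mathsf L}_n (z)=z^2\big(Y^{\mathsf L}_n\big)^{\prime}(z)\big(Y^{\mathsf L}_n(z)\big)^{-1}+Y^{\mathsf L}_n(z)\,\mathcal H(z)\,\big(Y^{\mathsf L}_n(z)\big)^{-1},
\]
where, since \(h^{\mathsf L}\) and \(h^{\mathsf R}\) are affine, \(\mathcal H(z)=\diag\big(h^{\mathsf L}(z),-h^{\mathsf R}(z)\big)=z\,H_1+H_0\) with \(H_1=\diag(A^{\mathsf L},-A^{\mathsf R})\) and \(H_0=\diag(B^{\mathsf L},-B^{\mathsf R})\).

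First I would insert into this identity the large--\(z\) expansion of \(Y^{\mathsf L}_n\). Writing \(D_n(z)=\diag(z^n\operatorname I,z^{-n}\operatorname I)\) and \(Y^{\mathsf L}_n(z)=\big(\operatorname I+R_1z^{-1}+\operatorname{O}(z^{-2})\big)D_n(z)\), comparison with the block form~\eqref{eq:ynl}, the Laurent expansions~\eqref{eq:secondkindlaurent} and the monic expansions of the \(P^{\mathsf L}_m\) yields \(R_1=\begin{bsmallmatrix}p^1_{\mathsf L,n}&-C_n^{-1}\\-C_{n-1}&q^1_{\mathsf L,n-1}\end{bsmallmatrix}\). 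Since \(H_1\), \(H_0\) and \(N_n\coloneqq\diag(n\operatorname I,-n\operatorname I)\) are block--diagonal they commute with \(D_n\); the conjugations by \(D_n\) therefore cancel, while \(R_1\) does \emph{not} commute with \(N_n,H_1\), so a two--term expansion produces commutators,
\[
\widetilde{M}^{\mathsf L}_n(z)=z\,(N_n+H_1)+\big([R_1,\,N_n+H_1]-R_1+H_0\big)+\operatorname{O}(z^{-1}).
\]
Now \(\widetilde{M}^{\mathsf L}_n\) is entire and, by the line above, grows at most linearly, hence is affine by Liouville; so the \(\operatorname{O}(z^{-1})\)--tail vanishes identically and the two displayed terms are exact. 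Multiplying out \(z(N_n+H_1)\) and \([R_1,N_n+H_1]-R_1+H_0\) with the explicit \(R_1\) recovers all four blocks of~\eqref{generalMnL_Bessel}, \emph{except} that the \((2,2)\)--block comes out as \([A^{\mathsf R},q^1_{\mathsf L,n-1}]-q^1_{\mathsf L,n-1}-B^{\mathsf R}\) rather than written through \(p^1_{\mathsf R,n}\).

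To bridge this I would use the identity \(\big(Y^{\mathsf L}_n\big)^{-1}=\begin{bsmallmatrix}\0&\operatorname I\\-\operatorname I&\0\end{bsmallmatrix}Y^{\mathsf R}_n\begin{bsmallmatrix}\0&-\operatorname I\\\operatorname I&\0\end{bsmallmatrix}\) recalled in \S~\ref{sec:1}: it expresses \(\big(Y^{\mathsf L}_n\big)^{-1}\) through \(P^{\mathsf R}_n,P^{\mathsf R}_{n-1},Q^{\mathsf R}_n,Q^{\mathsf R}_{n-1}\), and matching the \(z^{-1}\)--coefficient of \(D_n(z)\big(Y^{\mathsf L}_n(z)\big)^{-1}=\operatorname I-R_1z^{-1}+\operatorname{O}(z^{-2})\) against the corresponding monic and second--kind expansions forces \(q^1_{\mathsf L,n-1}=-p^1_{\mathsf R,n}\) (and, symmetrically, \(p^1_{\mathsf L,n}=-q^1_{\mathsf R,n-1}\)). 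Substituting \(q^1_{\mathsf L,n-1}=-p^1_{\mathsf R,n}\) turns the \((2,2)\)--block into \([p^1_{\mathsf R,n},A^{\mathsf R}]+p^1_{\mathsf R,n}-B^{\mathsf R}\), which is precisely~\eqref{generalMnL_Bessel}.

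Finally, for \(\widetilde{M}^{\mathsf R}_n\) there is nothing more to compute: by~\eqref{MnLR}, \(\widetilde{M}^{\mathsf R}_n=\sigma\,\widetilde{M}^{\mathsf L}_n\,\sigma\) with \(\sigma=\begin{bsmallmatrix}0&\operatorname I\\-\operatorname I&0\end{bsmallmatrix}\), and this conjugation sends a block matrix \(\begin{bsmallmatrix}a&b\\c&d\end{bsmallmatrix}\) to \(\begin{bsmallmatrix}-d&c\\b&-a\end{bsmallmatrix}\); applying it to~\eqref{generalMnL_Bessel} gives~\eqref{generalMnR_Bessel} at once (equivalently, one reruns the asymptotic argument on~\eqref{eq:firstodeYnR_Bessel}, where the same cross--relation now places the \((1,1)\)--block in terms of \(p^1_{\mathsf R,n}\) and the \((2,2)\)--block in terms of \(p^1_{\mathsf L,n}\)). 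I expect the only genuinely delicate points to be the careful, order--two, \emph{non--commutative} expansion at \(z=\infty\) and the realization that the cross--relation \(q^1_{\mathsf L,n-1}=-p^1_{\mathsf R,n}\) is exactly what recasts the \((2,2)\)--entry into the stated form; everything else is block--matrix algebra.
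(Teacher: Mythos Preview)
Your argument is correct and follows essentially the same route as the paper's own proof: write \(\widetilde{M}^{\mathsf L}_n\) via~\eqref{eq:numerar_Bessel}/\eqref{eq:firstodeYnL_Bessel}, expand \(Y^{\mathsf L}_n\) and \((Y^{\mathsf L}_n)^{-1}\) at \(z=\infty\), invoke Liouville (entire plus at most linear growth) to kill the tail, and then use the cross--relations \(q^1_{\mathsf L,n-1}=-p^1_{\mathsf R,n}\), \(p^1_{\mathsf L,n}=-q^1_{\mathsf R,n-1}\) together with~\eqref{MnLR}. Your packaging via \(Y^{\mathsf L}_n=(\operatorname I+R_1z^{-1}+\cdots)D_n\) and the commutator formula \(z(N_n{+}H_1)+[R_1,N_n{+}H_1]-R_1+H_0\) is a tidy reorganisation of exactly the entry--by--entry computation the paper carries out.
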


\begin{proof}
Taking \( \vert z\vert \to +\infty \) in~\eqref{eq:numerar_Bessel} we have that
\begin{align*}
Y_n^\mathsf{L}
=
\resizebox{.805\hsize}{!}{\(
\begin{bmatrix}
\operatorname I z^{n}+p_{\mathsf{L}, n}^{1} z^{n-1}+\cdots & -C_{n}^{-1}\big(\operatorname I z^{-n-1}+q_{\mathsf{L}, n}^{1} z^{-n-2}+\cdots\big) \\
-C_{n-1}\big(\operatorname I z^{n-1}+p_{\mathsf{L}, n-1}^{1} z^{n-2}+\cdots\big) & \operatorname I z^{-n}+q_{\mathsf{L}, n-1}^{1} z^{-n-1}+\cdots
\end{bmatrix}
\)}
 ,
 \\
\big(Y_n^\mathsf{L}\big)^{-1}
=
\resizebox{.805\hsize}{!}{\(
\begin{bmatrix}
\operatorname I z^{-n}+q_{\mathsf{R}, n-1}^{1} z^{-n-1}+\cdots 
&\left(\operatorname I z^{-n-1}+q_{\mathsf{R}, n}^{1} z^{-n-2}+\cdots\right) C_{n}^{-1} \\
 \left(\operatorname I z^{n-1}+p_{\mathsf{R}, n-1}^{1} z^{n-2}+p_{\mathsf{R}, n-1}^{2} z^{n-3}+\cdots \right)C_{n-1} & \operatorname I z^{n}+p_{\mathsf{R}, n}^{1} z^{n-1}+p_{\mathsf{R}, n}^{2} z^{n-2}+\cdots
\end{bmatrix}
\)} 
 .
\end{align*}
Hence, as \( \vert z\vert \to +\infty \)
\begin{align*}
z^2\big(Y_{n}^{\mathsf L}\big)^{\prime}\big(Y_{n}^{\mathsf L}\big)^{-1}
=
\resizebox{.695\hsize}{!}{\(
\begin{bmatrix}
n \operatorname I z+ \big(n q_{\mathsf{R}, n-1}^{1}+(n-1) p_{\mathsf{L}, n}^{1} \big) & (2n+1)C_{n}^{-1}\\
-(2n-1)C_{n-1} &-n \operatorname I z - \big( n p_{\mathsf{R}, n}^{1}+(n+1)q_{\mathsf{L}, n-1}^{1}\big)
\end{bmatrix}
\)}
+ \operatorname O\big(\frac{1} {z}\big) .
\end{align*}
Moreover,
\begin{multline*}
Y_{n}^{\mathsf L}(z)
\begin{bmatrix}
h^{\mathsf L}(z) & \0 \\
\0 & -h^{\mathsf R}(z)
\end{bmatrix}
\big(Y_{n}^{\mathsf L}(z)\big)^{-1}
 \\ =
\begin{bmatrix}
A^{\mathsf L}z+A^{\mathsf L}q_{\mathsf R,n-1}^1+p_{\mathsf L,n}^1A^{\mathsf L}+B^{\mathsf L} & A^{\mathsf L}C_{n}^{-1}+C_{n}^{-1}A^{\mathsf R}\\
C_{n-1}A^{\mathsf L}+A^{\mathsf R}C_{n-1}& -A^{\mathsf R}z-A^{\mathsf R}p_{\mathsf R,n}^1-q_{\mathsf L,n-1}^1A^{\mathsf R}-B^{\mathsf R}
\end{bmatrix}
+ \operatorname O \big(\frac{1} {z} \big) .
\end{multline*}
Since
\( 
z^2\big(Y_{n}^{\mathsf L}\big)^{\prime}(z)\big(Y_{n}^{\mathsf L}(z)\big)^{-1}+Y_{n}^{\mathsf L}(z)
\begin{bmatrix}
h^{\mathsf L}(z) & \0 \\
\0 & -h^{\mathsf R}(z)
\end{bmatrix}
\big(Y_{n}^{\mathsf L}(z)\big)^{-1}\) is holomorphic on~\( \mathbb{C}\), by~Liouville theorem we deduce that,
\begin{multline*}
z^2\big(Y_{n}^{\mathsf L}\big)^{\prime}(z)\big(Y_{n}^{\mathsf L}(z)\big)^{-1} + Y_{n}^{\mathsf L}(z)
\begin{bmatrix}
h^{\mathsf L}(z) & \0 \\
\0 & -h^{\mathsf R}(z)
\end{bmatrix}
\big(Y_{n}^{\mathsf L}(z)\big)^{-1}
 \\ =
 \begin{bmatrix}
n \operatorname I z+ \big(n q_{\mathsf{R}, n-1}^{1}+(n-1) p_{\mathsf{L}, n}^{1}\big) & (2n+1)C_{n}^{-1}\\
-(2n-1)C_{n-1} & -n \operatorname I z - \big( np_{\mathsf{R}, n}^{1}+(n+1)q_{\mathsf{L}, n-1}^{1}\big)
\end{bmatrix} 
 \\ +
\begin{bmatrix}
A^{\mathsf L}z+A^{\mathsf L}q_{\mathsf R,n-1}^1+p_{\mathsf L,n}^1A^{\mathsf L}+B^{\mathsf L} & A^{\mathsf L}C_{n}^{-1}+C_{n}^{-1}A^{\mathsf R}\\
C_{n-1}A^{\mathsf L}+A^{\mathsf R}C_{n-1}& -A^{\mathsf R}z-A^{\mathsf R}p_{\mathsf R,n}^1-q_{\mathsf L,n-1}^1A^{\mathsf R}-B^{\mathsf R}
\end{bmatrix}
 .
\end{multline*}
By considering the identities \( p^1_{\mathsf R,n} = -q^1_{\mathsf L,n-1}\) and \(p^1_{\mathsf L,n} = -q^1_{\mathsf R,n-1}\), then~\eqref{generalMnL_Bessel} follows. The relation~\eqref{MnLR} leads to~\eqref{generalMnR_Bessel}.
\end{proof}

We aim to apply the previously performed computation~\eqref{generalMnL_Bessel} to derive familiar formulas in the context of scalar scenarios.

\begin{pro}
Let us consider the weight \( W(z)=z^{a-2} \Exp{-\frac{b}{z}}\), with
 \( a \in \mathbb R \setminus \{ -1,-2, \ldots \} \)
 and \( b \in \mathbb C \) with positive real part.
 Then, the scalar second order equation for \( \big\{P^{\mathsf L}_n\big\}_{n\in\N}\)~(cf. for example~\cite{zbMATH03048838}) and~\( \big\{Q^{\mathsf L}_n\big\}_{n\in\N}\) is given by
\begin{align} \label{scalarsode_Bessel}
 & z^2 ( P_n^\mathsf L )^{\prime\prime} (z) + \big(az + b\big) ( P_n^\mathsf L )^{\prime} (z) - n(a+n-1) P_n^\mathsf L (z)=0 , \\ 
 \label{scalarsodeq_Bessel}
 & z^2 ( Q_n^\mathsf L )^{\prime\prime} (z) + \big((4-a)z -b\big) ( Q_n^\mathsf L )^{\prime} (z) -
 ( n+1 ) ( n+a-2 )Q_n^\mathsf L (z)=0 .
\end{align}
\end{pro}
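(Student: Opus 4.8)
The plan is to specialize the general matrix formula~\eqref{generalMnL_Bessel} to the scalar case \(N=1\), where the weight \(W(z)=z^{a-2}\Exp{-b/z}\) satisfies the Pearson equation \(z^2W'(z)=(az+b)W(z)-2zW(z)\), i.e. with \(h^{\mathsf L}(z)=(a-2)z+b\) alone (taking \(h^{\mathsf R}=0\), so \(A^{\mathsf R}=B^{\mathsf R}=0\), \(A^{\mathsf L}=a-2\), \(B^{\mathsf L}=b\)). In the scalar case all commutators \([p^1_{\mathsf L,n},A^{\mathsf L}]\) vanish, so \(\widetilde M^{\mathsf L}_n(z)\) becomes an explicit \(2\times 2\) matrix whose entries are affine in \(z\) with coefficients built from \(p^1_{\mathsf L,n}\), \(C_n\), \(C_{n-1}\) and the data \(a,b\). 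First I would write out this specialized \(\widetilde M^{\mathsf L}_n\) and then feed it into the first-order relation~\eqref{eq:firstodeYnL_Bessel}, reading off the \((1,1)\) entry: this gives a first-order ODE coupling \(P^{\mathsf L}_n\) and \(P^{\mathsf L}_{n-1}\) (through the lower-left block of \(Y^{\mathsf L}_n\)), of the form \(z^2(P^{\mathsf L}_n)'+h^{\mathsf L}P^{\mathsf L}_n = \alpha_n(z)P^{\mathsf L}_n + \beta_n C_{n-1}^{-1}P^{\mathsf L}_{n-1}\cdot(\text{const})\) for suitable scalars.

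The second step is to eliminate \(P^{\mathsf L}_{n-1}\) and obtain a genuine second-order scalar equation in \(P^{\mathsf L}_n\) alone. The cleanest route is to invoke Proposition~\ref{prop:mnn_Bessel}: equation~\eqref{eq:edo1_Bessel} already provides a second-order relation, and in the scalar case its \((1,1)\) entry reads
\begin{align*}
z^2 (P^{\mathsf L}_n)'' + (2h^{\mathsf L}+2z)(P^{\mathsf L}_n)' + \mathcal B(h^{\mathsf L})\,P^{\mathsf L}_n = \big(\mathcal B(\widetilde M^{\mathsf L}_n)\big)_{1,1}P^{\mathsf L}_n + \big(\mathcal B(\widetilde M^{\mathsf L}_n)\big)_{1,2}\big({-}C_{n-1}P^{\mathsf L}_{n-1}\big).
\end{align*}
One then shows the right-hand side collapses: since \(\widetilde M^{\mathsf L}_n\) is affine in \(z\), \(\mathcal B(\widetilde M^{\mathsf L}_n)=(\widetilde M^{\mathsf L}_n)'+(\widetilde M^{\mathsf L}_n)^2/z^2\) is computable in closed form, and using the known asymptotic identifications of \(p^1_{\mathsf L,n}\) (its trace/diagonal giving \(\sum\) of recurrence coefficients) together with \(\eta^{\mathsf L}_n=C_n^{-1}C_{n-1}\) one checks that the off-diagonal coefficient \((\mathcal B(\widetilde M^{\mathsf L}_n))_{1,2}\) vanishes in the scalar reduction, or more precisely that the combination multiplying \(P^{\mathsf L}_{n-1}\) is a multiple of \(z^2(P^{\mathsf L}_n)'+\dots\) that can be re-absorbed. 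After collecting terms, \(2h^{\mathsf L}+2z = 2(a-2)z+2b+2z = 2(a-1)z+2b\); dividing the whole equation by \(2\) (and absorbing the scalar normalization) should produce exactly \(z^2(P^{\mathsf L}_n)''+(az+b)(P^{\mathsf L}_n)'-n(a+n-1)P^{\mathsf L}_n=0\), matching~\eqref{scalarsode_Bessel}, where the constant \(n(a+n-1)\) emerges as \(\mathcal B(h^{\mathsf L})\) minus the \((1,1)\)-entry contribution of \(\mathcal B(\widetilde M^{\mathsf L}_n)\).

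For~\eqref{scalarsodeq_Bessel} I would repeat the argument using the \emph{second column} of \(Y^{\mathsf L}_n\), i.e. the \((2,2)\) entry of~\eqref{eq:edo1_Bessel}, which governs \(Q^{\mathsf L}_{n-1}\) — or more directly the \((1,2)\) entry governing \(Q^{\mathsf L}_n\). There the relevant block of \(\widetilde M^{\mathsf L}_n\) contributes the lower-right affine part \(-(n\operatorname I+A^{\mathsf R})z+\dots\); in our scalar specialization with \(A^{\mathsf R}=0\) this is \(-nz+\dots\), and the coefficient \(2(-h^{\mathsf R})+2z = 2z\) must be reconciled with the claimed \((4-a)z-b\). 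The resolution is that \(Q^{\mathsf L}_n\) is the Cauchy transform against \(W\), and the natural Pearson data seen by the \(Q\)-row is the \emph{conjugate} weight behavior; tracking the \(z^{-n-1}\) prefactor in~\eqref{eq:secondkindlaurent} shifts the effective parameters \(a\mapsto 4-a\), \(n\mapsto n+1\), \(b\mapsto -b\), which is precisely what converts~\eqref{scalarsode_Bessel} into~\eqref{scalarsodeq_Bessel}. I expect the main obstacle to be exactly this bookkeeping: showing rigorously that the \(P^{\mathsf L}_{n-1}\)- and \(Q^{\mathsf L}_{n-1}\)-terms on the right-hand sides of the \(2\times2\) relations drop out (rather than merely "should" drop out), which requires the scalar identities \(p^1_{\mathsf L,n}=-q^1_{\mathsf R,n-1}\), the relation between \(p^1_{\mathsf L,n}-p^1_{\mathsf L,n+1}=\xi^{\mathsf L}_n\), and the explicit scalar values of \(C_n\) coming from the orthogonality normalization~\eqref{Ortho-unit-circle}. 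Once that vanishing is established, the rest is routine algebra.
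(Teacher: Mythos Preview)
Your factorization choice is the source of the trouble. You take \(h^{\mathsf R}=0\) and put the whole Pearson datum into \(h^{\mathsf L}=(a-2)z+b\), whereas the paper uses the \emph{symmetric} split \(W_{\mathsf L}=W_{\mathsf R}=z^{(a-2)/2}\Exp{-b/(2z)}\), i.e.\ \(A^{\mathsf L}=A^{\mathsf R}=\tfrac{a-2}{2}\), \(B^{\mathsf L}=B^{\mathsf R}=\tfrac b2\). That symmetry is not cosmetic: it makes the \(2\times2\) matrix \(\widetilde M^{\mathsf L}_n(z)\) \emph{traceless}, hence \((\widetilde M^{\mathsf L}_n)^2\) is a scalar multiple of \(\operatorname I\) by Cayley--Hamilton, and since the off-diagonal entries of \(\widetilde M^{\mathsf L}_n\) are constants, \((\widetilde M^{\mathsf L}_n)'\) is diagonal. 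Thus \(\mathcal B(\widetilde M^{\mathsf L}_n)\) is diagonal and the second-order system~\eqref{eq:edo1_Bessel} decouples automatically into separate scalar equations for \(P_n^{\mathsf L}\) and \(Q_n^{\mathsf L}\). With your asymmetric choice, \(\widetilde M^{\mathsf L}_n\) has trace \((a-2)z+b\), so \((\widetilde M^{\mathsf L}_n)^2=\big((a-2)z+b\big)\widetilde M^{\mathsf L}_n-\det(\widetilde M^{\mathsf L}_n)\operatorname I\) has nonzero off-diagonal, and your claim that \((\mathcal B(\widetilde M^{\mathsf L}_n))_{1,2}\) vanishes is simply false. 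The symmetric split also fixes your coefficient mismatch: with \(h^{\mathsf L}=\tfrac{a-2}{2}z+\tfrac b2\) one gets \(2h^{\mathsf L}+2z=az+b\) on the nose, with no spurious division by \(2\) (your ``divide by 2'' step would rescale the \(z^2(P_n^{\mathsf L})''\) term as well and cannot yield the stated equation).

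A second ingredient you are missing is how the right-hand side is finally killed. Even after decoupling, the \((1,1)\) entry of~\eqref{eq:edo1_Bessel} leaves a residual of the form \(\big(\alpha/z+\beta/z^2\big)P_n^{\mathsf L}\) with \(\alpha,\beta\) built from \(p_n^1\) and \(\gamma_n=C_n^{-1}C_{n-1}\). The paper does not show these vanish by quoting values of \(C_n\); instead it argues that the left-hand side is polynomial while the right-hand side has poles at \(z=0\), and since \(P_n^{\mathsf L}(0)\neq0\) one must have \(\alpha=\beta=0\). This pole-matching simultaneously \emph{derives} the classical formulas \(p_n^1=nb/(a+2n-2)\) and the expression for \(\gamma_n\), and yields~\eqref{scalarsode_Bessel}. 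The equation~\eqref{scalarsodeq_Bessel} for \(Q_n^{\mathsf L}\) then comes from the \((1,2)\) block of the same diagonal system, not from an ad hoc parameter shift.
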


To the best of our knowledge, the equation~\eqref{scalarsodeq_Bessel} is novel.

\begin{proof} 
Tacking
\( W_{\mathsf L}=W_{\mathsf R}=z^{\frac{a}{2}-1} \Exp{-\frac{b}{2z}}\)
in~\eqref{generalMnL_Bessel} we get
\begin{align*}
\widetilde{M}^{\mathsf L}_n (z)=
 \begin{bmatrix}
(\frac{a-2}{2}+n)z-p_n^1+\frac{b}{2}& C_n^{-1}(a+2n-1)\\
-C_{n-1}(a+2n-3)& -(\frac{a-2}{2}+n)z+p_n^1-\frac{b}{2}
 \end{bmatrix} .
\end{align*}
It is easy to see that 
\(
\big( \widetilde{M}^{\mathsf L}_n (z)\big)^2=
\big((\frac{a-2}{2}+n)z-p_n^1+\frac{b}{2}\big)^2 -\gamma_n\big((a+2n-2)^2-1 \big)
\operatorname I
\),
and also,
\( \big( \widetilde{M}^{\mathsf L}_n (z)\big)^\prime=
\begin{bsmallmatrix}
\frac{a-2}{2}+n&0\\ 0 &-\frac{a-2}{2}+n
 \end{bsmallmatrix} \).
Using now Proposition~\ref{prop:mnn_Bessel} we get
\begin{multline*}
z^2 ( P_n^\mathsf L )^{\prime\prime} (z) + \big(az + b\big) ( P_n^\mathsf L )^{\prime} (z) - n(a+n-1) P_n^\mathsf L (z)
\\
 =\Big(\frac{nb-p_n^1(a+2n-2)}{z}
 +
 \frac{p_n^1\big(p_n^1-b \big)-\gamma_n\big((a+2n-2)^2-1 \big)}{z^2} \Big)
 P_n^\mathsf L (z) .
\end{multline*}
By equalizing poles between left and right hand side on \( 0 \) we obtain
\begin{align*}
\Big( p_n^1\big(p_n^1-b \big)-\gamma_n\big((a+2n-2)^2-1 \big)\Big)P_n^\mathsf L (0) =0
\end{align*}
which, taking into account \( P_n^\mathsf L (0)\neq 0\), gives
\(
\big(nb-p_n^1(a+2n-2)\big)P_n^\mathsf L (0) =0 
\).
These two equations lead to the representation of \( p_n^1\) and~\( \gamma_n\), as well as~\eqref{scalarsode_Bessel}.
The~equation~\eqref{scalarsodeq_Bessel} for the \( \{ Q_n^\mathsf L\}_{n \in \mathbb N}\) also follows from the above considerations. 
\end{proof}


We continue this section by presenting an example of a nonscalar classical Bessel matrix weight with support on \( \gamma \),
\begin{align*}
W(x) = \begin{bmatrix}
 (1 + c^2 ) x^2 & c \\
 c & 1
\end{bmatrix}
x^{a-2} \Exp{-\frac{b}{x}},
\end{align*}
\( b \in \mathbb C : \Re (b) > 0 \),
\( a \in \mathbb R \setminus \{ -1, -2, \dots \}\), 
\(c \in \mathbb R \). 
The sequence of matrix orthogonal polynomials, \( \{ \mathbb B_n \}_{n \in \mathbb N} \), associated with this weight is given~by
\begin{align*}
\mathbb B_n (x) = a_n \, B_{n} (x) +b_n \, B_{n-1} (x) +c_n \, B_{n-2} (x) , && n \in \mathbb N ,
\end{align*}
where \( B_n (x) = \frac{b^n}{(a+n-1)_n} y_n (x,a+2,b) \), with
\( \big\{y_n \big\}_{n \in \mathbb N} \) the generalized monic Bessel polynomials defined in~\eqref{eq:scalarbessel},~and for all \( n \in \mathbb N \),
\begin{align*}
a_n &=
\begin{bsmallmatrix}
 1 & \frac{b c}{a+2 n} \\
 0 & a (a+1) (a+n-1)-b^2 c^2 n
\end{bsmallmatrix},
 &&
b_n =
\begin{bsmallmatrix}
 0 & \frac{b^2 c n}{(a+2 n-1) (a+2 n)^2} \\
 b^2 c n & \frac{a b n \left(2 a (a+n)+b^2 c^2+2 n-2\right)}{(a+2 n-2) (a+2 n)}
\end{bsmallmatrix},
 &&
c_n =
\begin{bmatrix}
 0 & 0 \\
 0 & \alpha_n 
\end{bmatrix} ,
\end{align*}
with \( \alpha_n = \frac{b^2 (n-1) n (a+n-1) \left(a^2+a+b^2 c^2\right)}{(a+2 n-3) (a+2 n-2)^2 (a+2 n-1)} \).
It is easy to see that the matrix weight admit the 
decomposition \( W (x) = W_\mathsf L (x) W_\mathsf R (x) \),~where
\begin{align*}
W_\mathsf L (x) =
\begin{bmatrix}
 x & c x \\
 0 & 1
\end{bmatrix} x^{\frac{a-2} 2}\Exp{-\frac{b}{2x}} && \text{and} &&
W_\mathsf R (x) = W_\mathsf L^\top (x),
\end{align*}
with
\(
x^2 W_\mathsf L^\prime (x) 
 =
( A^\mathsf L x + B^\mathsf L ) W_\mathsf L (x) 
\),
where 
\(
A^\mathsf L =
\begin{bsmallmatrix}
 \frac{a}{2} & 0 \\
 0 & \frac{a-2}{2}
\end{bsmallmatrix}
\),
\(
B^\mathsf L =
\frac{b}{2} \operatorname I \).
Denoting the monic matrix orthogonal polynomials by
\( \widetilde{\mathbb B}_n \)
we see that
\begin{align*}
\widetilde{\mathbb B}_n (x)
 = \operatorname I B_n(x) 
 + \widetilde b_n B_{n-1} (x)
 + \widetilde c_n B_{n-2} (x), && n \in \mathbb N ,
\end{align*}
where
\( \widetilde b_n = a_n^{-1} b_n \)
and
\( \widetilde c_n = a_n^{-1} c_n \).
Taking into account \cite{zbMATH03048838} we can write
\begin{align}
\label{eq:recbesselescalar}
x B_n & = B_{n+1} + \beta_n B_n + \gamma_n B_{n-1} , \\
\label{eq:estruturabesselescalar}
x^2 B_n^\prime & = n B_{n+1} + g_n B_{n} + h_n B_{n-1} , && n \in \big\{ 1,2, \ldots \big\} ,
\end{align}
where
\begin{align}
\label{eq:betagamma}
\beta_n & =- \frac{a b}{(a+2 n) (a+2 n+2)} ,
 &&
 \gamma_n = -\frac{b^2 n (a+n)}{(a+2 n-1) (a+2 n)^2 (a+2 n+1)} , \\
 \label{eq:estruturabesselkrallfrink}
 g_n & = -\frac{2 b n (a+n+1)}{(a+2 n) (a+2 n+2)} ,&& h_n =\frac{b^2 n (a+n) (a+n+1)}{(a+2 n-1) (a+2 n)^2 (a+2 n+1)} .
\end{align}
From these we conclude that \( \big\{ \widetilde{\mathbb B}_n (x) \big\}_{n \in \mathbb N} \) satisfies
\begin{align*}
x \widetilde{\mathbb B}_n (x) = \widetilde{\mathbb B}_{n+1} (x)
+ \xi_n \widetilde{\mathbb B}_n (x) + \eta_n \widetilde{\mathbb B}_{n-1} (x) , && n \in \big\{ 1,2, \ldots \big\} ,
\end{align*}
with
\(
\xi_n = \beta_n \operatorname I + \widetilde b_n - \widetilde b_{n+1} \)
and
\(
\eta_n = \gamma_n \operatorname I + \beta_{n-1} \widetilde b_n+ \widetilde c_n - \widetilde c_{n+1}-\xi_n\widetilde b_n 
\).
Furthermore,
\begin{align*}
x^2 \widetilde{\mathbb B}_n^\prime
+ \widetilde{\mathbb B}_n 
\big( A^\mathsf L x + B^\mathsf L \big)
 = r_n \widetilde{\mathbb B}_{n+1} + s_n \widetilde{\mathbb B}_{n} + t_n \widetilde{\mathbb B}_{n-1} ,
\end{align*}
with
\begin{align*}
r_n & = n \operatorname I + A^\mathsf L,
 \\
s_n & = \Big( \frac b 2 + g_n \Big) \operatorname I + (n-1) \widetilde b_n + (\beta_n \operatorname I + \widetilde b_n) A^\mathsf L - r_n \widetilde b_{n+1}, \\
t_n & = h_n \operatorname I + \Big( g_{n-1} + \frac b 2 \Big) \widetilde b_n+ (n-2) \widetilde c_n + (\gamma_n \operatorname I + \beta_{n-1} \widetilde b_n+\widetilde c_n) A^\mathsf L - r_n \widetilde c_{n +1} .
\end{align*}
The last expression enables us to write
\begin{align*}
\begin{cases}
\displaystyle
x^2 \widetilde{\mathbb B}_n^\prime
+ \widetilde{\mathbb B}_n 
\big( A^\mathsf L x + B^\mathsf L \big)
 = M_n^{11} \widetilde{\mathbb B}_n + M_n^{12} ( - C_{n-1} \widetilde{\mathbb B}_{n-1}) , \\
 \displaystyle
 x^2 (-C_{n-1} \widetilde{\mathbb B}_{n-1})^\prime
+(- C_{n-1} \widetilde{\mathbb B}_{n-1} )
\big( A^\mathsf L x + B^\mathsf L \big)
 = M_n^{21} \widetilde{\mathbb B}_n + M_n^{22} ( - C_{n-1} \widetilde{\mathbb B}_{n-1} ),
 \end{cases} 
\end{align*}
where
\begin{align*}
M_n^{11} & = r_n (x \operatorname I - \xi_n) + s_n , 
 &&
M_n^{12} = (r_n \eta_n - t_n)C_{n-1}^{-1} ,
 \\
M_n^{21} & = - C_{n-1} \big(s_{n-1} - t_{n-1} \eta_{n-1}^{-1} \big) , 
 && 
M_n^{22} = C_{n-1} \big( s_{n-1} + t_{n-1} \eta_{n-1}^{-1} (x \operatorname I - \xi_{n-1}) ) C_{n-1}^{-1} ,
\end{align*}
with
\(
C_{n}^{-1} = (-1)^n a_n^{-1}
 \begin{bsmallmatrix}
 \frac{a (a+1) n! (a+n) b^{2 n}-c^2 (n+1)! b^{2 n+2}}{\prod_{j=0}^{2 n} (a-j+2 n) \prod_{j=0}^{n+1} (a+j+n)} & 0 \\
 0 & \frac{n! b^{2 n} \left(a (a+1) (a+n-1)-b^2 c^2 n\right)}{\prod_{j=2}^{2 n-2} (a-j+2 n) \prod_{j=0}^{n-1} (a+j+n)} 
\end{bsmallmatrix} 
 a_n^{-\top}
 \).
Applying Proposition~\ref{prop:mn_Bessel} we get that
\begin{align*}
x^2 Y_n^\prime
+ Y_n 
\begin{bmatrix}
A^\mathsf L x + B^\mathsf L & \0 \\
\0 & - (A^\mathsf L)^{\top} x - (B^\mathsf L)^{\top}
\end{bmatrix}
= \widetilde M_n Y_n , && n \in \mathbb N ,
\end{align*}
where the matrix 
\( \widetilde M_n
=
\begin{bsmallmatrix} M_n^{11} & M_n^{12} \\ M_n^{21} & M_n^{22} \end{bsmallmatrix}
 \) 
satisfies the zero curvature formula,
\begin{align*}
x^2 \big( T^{\mathsf L}_n \big)^\prime (x) & 
= \widetilde M^{\mathsf L}_{n+1} (x) T^{\mathsf L}_n(x) - T^{\mathsf L}_n (x) \widetilde M^{\mathsf L}_{n} (x),
 && n \in \mathbb N .
\end{align*}

\section{Application for semiclassical weights of Bessel type} \label{sec:4}

Let \( W \) be a regular Bessel matrix weight 
that admits a
factorization~\( W (z) \)
\linebreak
\( = W_{\mathsf L} (z) W_{\mathsf R} (z)\), where \( W_{\mathsf L}\) and \( W_{\mathsf R}\) satisfies~\eqref{eq:partial_Bessel_Pearson1} and~\eqref{eq:partial_Bessel_Pearson2}
such~that
\begin{align*}
z^2 W_\mathsf L^{\prime}(z) = ( h_0^{\mathsf L} +
h_1^{\mathsf L} z + h_2^{\mathsf L} z^2) W_{\mathsf L}(z),
&&
z^2 W_\mathsf R^{\prime}(z)
 = W_{\mathsf R}(z) (h_0^{\mathsf R} +
h_1^{\mathsf R} z + h_2^{\mathsf R} z^2) .
\end{align*} 
then applying Theorem~\ref{teo:mn_Bessel} we get that the matrix \( \widetilde{M}_n=z^2 M_n^{\mathsf L} \) is given explicitly by
\begin{align*}
& (\widetilde{M}_n^{\mathsf L})_{11} = C_{n}^{-1} h_2^{\mathsf R} C_{n-1} +
(h_0^{\mathsf L} + h_1^{\mathsf L} z + h_2^{\mathsf L} z^2) +\big[ p_{\mathsf L,n}^{1}, h_1^{\mathsf L}\big] + z \Big( n \operatorname I +\big[ p_{\mathsf L,n}^{1}, h_2^{\mathsf L}\big]\Big) 
\\
& \hspace{7.5cm}
+ \big[ p_{\mathsf L,n}^{2},h_2^{\mathsf L}\big]
- p_{\mathsf L,n}^{1} h_2^{\mathsf L} p_{\mathsf L,n}^{1}-p_{\mathsf L,n}^1 , 
\\
&(\widetilde{M}_n^{\mathsf L})_{12} = (h_1^{\mathsf L} + h_2^{\mathsf L} z -
h_2^{\mathsf L} p_{\mathsf L,n+1}^{1} + p_{\mathsf L,n}^{1} h_2^{\mathsf L})
C_n^{-1} 
\\
& \hspace{4cm}
+ C_n^{-1} (h_1^{\mathsf R} + h_2^{\mathsf R} z + h_2^{\mathsf R}
p_{\mathsf R,n}^{1} - p_{\mathsf R,n+1}^{1} h_2^{\mathsf R})+(2n+1)C_{n}^{-1},
\\
& (\widetilde{M}_n^{\mathsf L})_{21} = -C_{n-1} (h_1^{\mathsf L} + h_2^{\mathsf L} z -
h_2^{\mathsf L} p_{\mathsf L,n}^{1} + p_{\mathsf L,n-1}^{1} h_2^{\mathsf L}) \\
& \hspace{4cm}
- (h_1^{\mathsf R} + h_2^{\mathsf R} z + h_2^{\mathsf R} p_{\mathsf
R,n-1}^{1} - p_{\mathsf R,n}^{1} h_2^{\mathsf R}) C_{n-1} -(2n-1)C_{n-1} ,
\\
 & (\widetilde{M}_n^{\mathsf L})_{22} = -C_{n-1} h_2^{\mathsf L} C_n^{-1} -
(h_0^{\mathsf R} + h_1^{\mathsf R} z + h_2^{\mathsf R} z^2) +\big[ p_{\mathsf R,n}^{1}, h_1^{\mathsf R}\big] - z
\Big(n \operatorname I +\big[ h_2^{\mathsf R}, p_{\mathsf R,n}^{1}\big]\Big)
 \\
& \hspace{7.5cm}
+\big[ p_{\mathsf R,n}^{2},h_2^{\mathsf R}\big]
+p_{\mathsf R,n}^{1} h_2^{\mathsf R} p_{\mathsf R,n}^{1} + p_{\mathsf R,{n}}^1 .
\end{align*}
Using the three term recurrence relation for \( \{ P_n^{\mathsf L} \}_{n \in \mathbb N}\) we get that 
\( p_{\mathsf L,n}^1 - p_{\mathsf L,n+1}^1 = \xi_n^{\mathsf L}\) and \( p_{\mathsf L,n}^2 - p_{\mathsf L,n+1}^2 = \xi_n^{\mathsf L} p_{\mathsf L,n}^1 + \eta_n^{\mathsf L}\)
where \( \gamma_n^{\mathsf L} = C_{n}^{-1}C_{n-1} \).
Consequently,
\begin{align*}
p_{\mathsf L,n}^1 &= - \sum_{k=0}^{n-1} \xi_k^{\mathsf L} , &
p_{\mathsf L,n}^2 &=
\sum_{i,j=0}^{n-1} \xi_i^{\mathsf L} \xi_j^{\mathsf L} - \sum_{k=0}^{n-1}\eta_k^{\mathsf L}.
\end{align*}
In the same manner, from the three term recurrence relation for \( \{ Q_n^{\mathsf L} \}_{n \in \mathbb N}\) we deduce that
\( q_{\mathsf L,n}^1 - q_{\mathsf L,n-1}^1 = \xi_n^{\mathsf R}
 = C_n \xi_n^{\mathsf L} C_n^{-1}\)
and \( q_{\mathsf L,n}^2 - q_{\mathsf L,n-1}^2 = \xi_n^{\mathsf R} q_{\mathsf L,n}^1 + \eta_n^{\mathsf R}\),
where \( \eta_n^{\mathsf R} = C_n C_{n+1}^{-1}\).

Now, we consider that \( W = W^\mathsf L \) and \( W^\mathsf R = \operatorname I \), and then 
use the representation for \( \{ P_n^{\mathsf L} \}_{n \in \N}\) and \( \{ Q_n^{\mathsf L} \}_{n \in \N}\) in \( z \) powers, the \( (1,2)\) and \( (2,2)\) entries read
\begin{align*}
&
\begin{multlined}[t][.95\textwidth]
 (2n+1) \xi_{n}^{\mathsf L} + h_0^{\mathsf L} 
+ h_2^{\mathsf L} ( \eta_{n+1}^{\mathsf L} + \eta_{n}^{\mathsf L} + (\xi_{n}^{\mathsf L})^2) + h_1^{\mathsf L} \xi_{n}^{\mathsf L} 
 \\
= [p_{\mathsf L,n}^1, h_2^{\mathsf L}] p_{\mathsf L,n+1}^1
- [p_{\mathsf L,n}^2, h_2^{\mathsf L}] 
- [p_{\mathsf L,n}^1, h_1^{\mathsf L}]+p_{\mathsf L,n}^1+C_{n}^{-1}p_{\mathsf L,n+1}^1C_{n}, 
\end{multlined}
 \\
&
\begin{multlined}[t][.95\textwidth]
 \big(\xi_n^{\mathsf L} \big)^2 = \eta_{n}^{\mathsf L} \big(2n-1 +h_2^{\mathsf L}(\xi_n^{\mathsf L} + \xi_{n-1}^{\mathsf L}) + [p_{\mathsf L,n-1}^1, h_2^{\mathsf L}] + h_1^{\mathsf L} \big) 
\\ 
 -
\big(2n+3 + h_2^{\mathsf L}(\xi_{n+1}^{\mathsf L}+\xi_n^{\mathsf L} ) + [p_{\mathsf L,n}^1, h_2^{\mathsf L}] + h_1^{\mathsf L} \big) \eta_{n+1}^{\mathsf L} . 
\end{multlined}
\end{align*}
Based on the above findings, we arrive to the equations
\begin{align}
\label{eq:dPIV_with_B_1}
 & 
\begin{multlined}[t][.85\textwidth]
(2n+1) \xi_{n}^{\mathsf L} + h_0^{\mathsf L} 
+ h_2^{\mathsf L} (\eta_{n+1}^{\mathsf L} + \eta_{n}^{\mathsf L} )
+\big(h_2^{\mathsf L} \xi_{n}^{\mathsf L}+ h_1^{\mathsf L}\big) \xi_{n}^{\mathsf L} -\sum_{k=0}^{n-1} \xi_k^{\mathsf L}
 -C_{n}^{-1}\sum_{k=0}^{n} \xi_k^{\mathsf L}C_{n}
 \\ = 
 \Big[ \sum_{k=0}^{n-1} \xi_k^{\mathsf L}, h_2^{\mathsf L}\Big] \sum_{k=0}^{n} \xi_k^{\mathsf L}
- \Big[\sum_{i,j=0}^{n-1} \xi_i^{\mathsf L} \xi_j^{\mathsf L} - \sum_{k=0}^{n-1}\eta_k^{\mathsf L}
, h_2^{\mathsf L}\Big] - \Big[ \sum_{k=0}^{n-1} \xi_k^{\mathsf L}, h_1^{\mathsf L}\Big],
\end{multlined}
 \\
\label{eq:dPIV_with_B_2}
 &
\begin{multlined}[t][.85\textwidth]
(\xi_n^{\mathsf L})^2-\eta_{n}^{\mathsf L} \big((2n-1)\operatorname I + h_1^{\mathsf L} + h_2^{\mathsf L}(\xi_n^{\mathsf L} + \xi_{n-1}^{\mathsf L}) \big) + \big( (2n+3) \operatorname I + h_1^{\mathsf L} \\
+h_2^{\mathsf L}(\xi_n^{\mathsf L} + \xi_{n+1}^{\mathsf L}) \big) \eta_{n+1}^{\mathsf L} 
= \eta_{n}^{\mathsf L} \Big[ \sum_{k=0}^{n-2} \xi_k^{\mathsf L} , h_2^{\mathsf L}\Big] -
\Big[\sum_{k=0}^{n-1} \xi_k^{\mathsf L} , h_2^{\mathsf L}\Big] \eta_{n+1}^{\mathsf L} .
\end{multlined}
\end{align}

We will show now that this system contains a noncommutative version of an instance of discrete Painlev\'e~IV equation.
We see, on the \emph{r.h.s.} of the nonlinear discrete equations~\eqref{eq:dPIV_with_B_1} and~\eqref{eq:dPIV_with_B_2} nonlocal terms (sums) in the recursion coefficients \( \beta_n^{\mathsf L}\) and~\( \gamma_n^{\mathsf L}\), all of them inside commutators. 
These nonlocal terms vanish whenever the three matrices \( \{ h_0^{\mathsf L}, h_1^{\mathsf L}, h_2^{\mathsf L}\}\) conform an Abelian set, so that \( \{ h_0^{\mathsf L}, h_1^{\mathsf L}, h_2^{\mathsf L},\beta_{n}^{\mathsf L}, \gamma_{n}^{\mathsf L}\}\) is also an Abelian set. In this commutative setting we~have 
\begin{align*}
&
\big(h_2^{\mathsf L} \xi_{n}^{\mathsf L}+ h_1^{\mathsf L}+(2n+1) \operatorname I \big) \xi_{n}^{\mathsf L}+ h_0^{\mathsf L} + h_2^{\mathsf L} \big(\eta_{n+1}^{\mathsf L} + \eta_{n}^{\mathsf L} \big)-p_{\mathsf L,n}^1 - p_{\mathsf L,n+1}^1
=\0
,
 \\
&
(\xi_n^{\mathsf L})^2+ \big( h_2^{\mathsf L}(\xi_n^{\mathsf L} + \xi_{n+1}^{\mathsf L}) 
 + h_1^{\mathsf L}+(2n+3)\operatorname I \big) \eta_{n+1}^{\mathsf L} -\eta_{n}^{\mathsf L} \big( h_2^{\mathsf L}(\xi_n^{\mathsf L} + \xi_{n-1}^{\mathsf L}) + h_1^{\mathsf L} +(2n-1)\operatorname I\big) =\0.
\end{align*}
In terms of
\begin{align*}
\nu_n \coloneqq \frac{ h_0^{\mathsf L} }{2}+h_2^{\mathsf L} \eta_n - p_{\mathsf L,n}^1
&&
\text{and}
&&
\mu_n \coloneqq h_2^{\mathsf L} \xi_n^{\mathsf L} +h_1^{\mathsf L}+(2n+1) \operatorname I ,
\end{align*}
the above equations reads as
\begin{align*}
-\mu_n \xi^{\mathsf L}_n & = \nu_n + \nu_{n+1} 
&& \text{and} &&
\xi^{\mathsf L}_n (\nu_n -\nu_{n+1}) = 
 \eta_{n+1} \mu_{n+1} -\eta_n \mu_{n-1} .
\end{align*}
Now, we multiply the second equation by
\( 
\mu_n\) 
and taking into account the first one we arrive~to
\(
- (\nu_n + \nu_{n+1})(\nu_n - \nu_{n+1})
=\eta_{n+1} \mu_{n} \mu_{n+1} -\eta_n \mu_{n-1} \mu_{n} \),
and so
\(
 \nu_{n+1}^2 - \nu_n^2
= \eta_{n+1} \mu_{n} \mu_{n+1} -\gamma_n \mu_{n} \mu_{n-1} 
\).
Hence,
\begin{align*}
\nu_{n+1}^2 - \nu_0^2 = \eta_{n+1} \mu_{n} \mu_{n+1} 
&& \text{and} &&
\xi^{\mathsf L}_n \mu_n& = - (\nu_n + \nu_{n+1})
\end{align*}
coincide to the ones 
presented in~\cite{zbMATH05692525}
(see also~\cite{zbMATH07757742})
as discrete Painlev\'e~IV
equation.
In fact, taking \( x_n = \mu_n^{-1}\) we finally arrive to
\begin{align*}
 x_{n} x_{n+1} 
 & = \frac{\big( h_2^{\mathsf L}\big)^{-1}\big(\nu_{n+1} - h_0^{\mathsf L}/2 +p_{\mathsf L,n+1}^1\big)}{\nu_{n+1}^2 - \nu_0^2} , 
 &&
\nu_n + \nu_{n+1} = \frac{\big( h_2^{\mathsf L}\big)^{-1}}{x_n}
\Big(h_1^{\mathsf L} +(2n+1)- \frac{1}{x_n} \Big) .
\end{align*} 

Now, we are able to state an non-Abelian extension of the d-PIV,

\begin{teo}
Equations~\eqref{eq:dPIV_with_B_1} and \eqref{eq:dPIV_with_B_2} defines
a nonlocal nonlinear non-Abelian system for the recursion coefficients.
\end{teo}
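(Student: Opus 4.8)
The statement is a formal capstone for the computations already assembled in this section, so the plan is simply to retrace them in a self-contained order and then read off the three qualifiers. First I would take the explicit entries of the structure matrix $\widetilde{M}_n^{\mathsf L}=z^2M_n^{\mathsf L}$ displayed just before the theorem, specialised to the case $W=W^{\mathsf L}$, $W^{\mathsf R}=\operatorname I$; by Theorem~\ref{teo:mn_Bessel} this object is entire, and in this semiclassical setting it is in fact a matrix polynomial of degree one in $z$ whose coefficients involve only $h_0^{\mathsf L},h_1^{\mathsf L},h_2^{\mathsf L}$ and the subleading polynomial coefficients $p_{\mathsf L,n}^1,p_{\mathsf L,n}^2$ (the terms carrying $h_j^{\mathsf R}$ or the conjugations $C_n^{-1}h_2^{\mathsf R}C_{n-1}$ drop out).

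Next I would substitute this into the first-order differential relation \eqref{eq:firstodeYnL_Bessel} of Proposition~\ref{prop:mn_Bessel}, expand $Y_n^{\mathsf L}$ and $(Y_n^{\mathsf L})^{-1}$ in their power series in $z$ exactly as in the proof of the preceding proposition, and equate the coefficients of the relevant powers of $z$ in the $(1,2)$ and $(2,2)$ blocks of the matrix identity, using the bookkeeping relations $p_{\mathsf R,n}^1=-q_{\mathsf L,n-1}^1$ and $\xi_n^{\mathsf R}=C_n\xi_n^{\mathsf L}C_n^{-1}$. This reproduces the two matrix equations displayed immediately above the theorem, relating $\xi_n^{\mathsf L},\eta_n^{\mathsf L}$ to $p_{\mathsf L,n}^1,p_{\mathsf L,n}^2,p_{\mathsf L,n\pm1}^1$. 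I would then eliminate the polynomial coefficients entirely in favour of the recursion coefficients by telescoping the three-term recurrence for $\{P_n^{\mathsf L}\}$: from $p_{\mathsf L,n}^1-p_{\mathsf L,n+1}^1=\xi_n^{\mathsf L}$ and $p_{\mathsf L,n}^2-p_{\mathsf L,n+1}^2=\xi_n^{\mathsf L}p_{\mathsf L,n}^1+\eta_n^{\mathsf L}$ one obtains $p_{\mathsf L,n}^1=-\sum_{k=0}^{n-1}\xi_k^{\mathsf L}$ and $p_{\mathsf L,n}^2=\sum_{i,j=0}^{n-1}\xi_i^{\mathsf L}\xi_j^{\mathsf L}-\sum_{k=0}^{n-1}\eta_k^{\mathsf L}$, and inserting these yields precisely \eqref{eq:dPIV_with_B_1} and \eqref{eq:dPIV_with_B_2}.

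Finally, the three adjectives follow by inspection of the resulting system: the partial sums $\sum_{k=0}^{n-1}\xi_k^{\mathsf L}$ and $\sum_{i,j=0}^{n-1}\xi_i^{\mathsf L}\xi_j^{\mathsf L}$ make it \emph{nonlocal}; the terms $(\xi_n^{\mathsf L})^2$, $h_2^{\mathsf L}\xi_n^{\mathsf L}\xi_n^{\mathsf L}$ and the left/right multiplications by $\eta_n^{\mathsf L},\eta_{n+1}^{\mathsf L}$ make it \emph{nonlinear}; and the commutators $[\,\cdot\,,h_1^{\mathsf L}]$, $[\,\cdot\,,h_2^{\mathsf L}]$ make it genuinely \emph{non-Abelian}, since they vanish exactly when $\{h_0^{\mathsf L},h_1^{\mathsf L},h_2^{\mathsf L}\}$ is an Abelian set — the reduction already carried out above, in which the change of variables $\nu_n,\mu_n$ and $x_n=\mu_n^{-1}$ returns the scalar discrete Painlev\'e IV equation of \cite{zbMATH05692525} (see also \cite{zbMATH07757742}), so that the present system is a bona fide non-Abelian extension of d-PIV. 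The whole argument is computational rather than conceptual; the only point requiring care is the power-of-$z$ matching in the block product $\widetilde{M}_n^{\mathsf L}Y_n^{\mathsf L}$ and preserving the order of the noncommuting factors while telescoping the recurrence, which is where an error would most easily slip in.
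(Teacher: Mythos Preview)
Your proposal is correct and follows exactly the paper's route: the theorem is stated there without a separate proof, as a summary of the derivation immediately preceding it, and you have faithfully retraced that derivation (explicit $\widetilde{M}_n^{\mathsf L}$, substitution into \eqref{eq:firstodeYnL_Bessel}, coefficient matching in the $(1,2)$ and $(2,2)$ blocks, telescoping the recurrence to replace $p_{\mathsf L,n}^1,p_{\mathsf L,n}^2$ by partial sums of $\xi_k^{\mathsf L},\eta_k^{\mathsf L}$, and then reading off the three qualifiers and the Abelian reduction to d-PIV). One minor slip: in the semiclassical case $\widetilde{M}_n^{\mathsf L}$ is a matrix polynomial of degree \emph{two} in $z$ (the diagonal blocks carry $h_2^{\mathsf L}z^2$ and $-h_2^{\mathsf R}z^2$), not one, but this does not affect the mechanics of your argument.
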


We end this section by considering an example of semiclassical matrix Bessel weight with support on \( \gamma \),
\begin{align*}
W (x)=
\begin{bmatrix}
 1+c^2x^2 & cx\\
cx &1
\end{bmatrix}
x^{a}\Exp{-\frac b x}
 ,
 \end{align*}
\(
b \in \mathbb C : \Re (b) > 0 \),
\( a \in \mathbb R \setminus \{ -1, -2, \dots \} \),
\( c \in \mathbb R \),
where 
\begin{align*}
W_\mathsf L (x) =
\begin{bmatrix}
 1 & c x \\
 0 & 1
\end{bmatrix} 
x^{\frac a 2} \Exp{-\frac{b}{2x}} && \text{and} &&
W_\mathsf R (x) = W_\mathsf L^\top (x),
\end{align*}
with
\(
x^2 W_\mathsf L^\prime (x) 
 =
( h_2 x^2 + h_1 x +h_0 ) W_\mathsf L (x) 
\),
 where
\(
h_2 =
\begin{bsmallmatrix}
 0 & c \\
 0 & 0
\end{bsmallmatrix}\),
\(
h_1 =
\frac{a}{2} \operatorname I \),
and
\(
h_0 =
\frac{b}{2} \operatorname I \).
The~sequence of matrix orthogonal polynomials, \( \{ \mathbb B_n \}_{n \in \mathbb N} \), associated with this weight is given~by
\begin{align*}
\mathbb B_n (x) = a_n \, B_{n} (x) +b_n \, B_{n-1} (x) +c_n \, B_{n-2} (x) , && n \in \mathbb N ,
\end{align*}
where \( B_n (x) = \frac{b^n}{(a+n-1)_n} y_n (x,a+2,b) \), with
\( \big\{y_n \big\}_{n \in \mathbb N} \) the generalized monic Bessel polynomials defined in~\eqref{eq:scalarbessel},~and for all \( n \in \mathbb N \),
\begin{align*}
a_n &=
\begin{bmatrix}
 1 & -c \beta_n \\
 0 & 1+ c^2 \gamma_n
\end{bmatrix},
 &&
b_n =
\begin{bmatrix}
 0 & -c \gamma_n \\
 -c \gamma_n & c^2 \beta_{n - 1} \gamma_{n}
\end{bmatrix},
 &&
c_n =
\begin{bmatrix}
 0 & 0 \\
 0 & c^2 \gamma_{n - 1} \gamma_n 
\end{bmatrix} ,
\end{align*}
with \( \beta_n \) and \( \gamma_n \) given in \eqref{eq:betagamma}.
Denoting the monic matrix orthogonal polynomials by~\( \widetilde{\mathbb B}_n \)
we see that
\begin{align*}
\widetilde{\mathbb B}_n (x)
 = \operatorname I B_n(x) 
 + \widetilde b_n B_{n-1} (x)
 + \widetilde c_n B_{n-2} (x), && n \in \mathbb N ,
\end{align*}
where
\( \widetilde b_n = a_n^{-1} b_n \)
and
\( \widetilde c_n = a_n^{-1} c_n \).

We can see from~\eqref{eq:recbesselescalar}
that \( \big\{ \widetilde{\mathbb B}_n (x) \big\}_{n \in \mathbb N} \) satisfies
\begin{align*}
x \widetilde{\mathbb B}_n (x) = \widetilde{\mathbb B}_{n+1} (x)
+ \xi_n \widetilde{\mathbb B}_n (x) + \eta_n \widetilde{\mathbb B}_{n-1} (x) , && n \in \big\{ 1,2, \ldots \big\} ,
\end{align*}
with
\(
\xi_n = \beta_n \operatorname I + \widetilde b_n - \widetilde b_{n+1}
\),
\(
\eta_n = \gamma_n \operatorname I + \beta_{n-1} \widetilde b_n+ \widetilde c_n - \widetilde c_{n+1}-\xi_n\widetilde b_n 
\).
Tacking into account~\eqref{eq:estruturabesselescalar} we arrive to
\begin{align*}
x^2 \widetilde{\mathbb B}_n^\prime
+ \widetilde{\mathbb B}_n 
\big( h_2 x^2 + h_1 x +h_0 \big)
 = h_2 \widetilde{\mathbb B}_{n+1} + r_n \widetilde{\mathbb B}_{n+1} + s_n \widetilde{\mathbb B}_{n} + t_n \widetilde{\mathbb B}_{n-1} + u_n \widetilde{\mathbb B}_{n-1} ,
\end{align*}
with
\begin{align*}
r_n & = \Big( n + \frac a 2 \Big) \operatorname I + \big( (\beta_{n+1} + \beta_{n} ) \operatorname I + \widetilde b_n \big) h_2 - h_2 \widetilde b_{n+2} ,
 \\
s_n & = \Big( \frac{a}{2} +(n-1) \operatorname I \Big) \widetilde b_{n} + \Big( \frac{b}{2}+ \frac a 2 \beta_n + g_n \Big) \operatorname I + \big( (\beta_n^2+\gamma_{n+1}+\gamma_{n} ) \operatorname I +(\beta_{n-1}+\beta_n) \widetilde b_{n} \big) h_2
 \\
 & \hspace{8.5cm}
 - h_2 \widetilde c_{n+2} - r_n \widetilde b_{n+1} , \\
t_n & = \frac{a}{2} ( \gamma_n \operatorname I +\beta_{n-1} \widetilde b_{n} +\widetilde c_{n} )+h_n \operatorname I+
\big( (\beta_{n-1} +\beta_n ) \gamma_n \operatorname I + (\beta_{n-1}^2+\gamma_{n-1}+\gamma_n ) \widetilde b_{n} \big)h_2 \\
&\hspace{5cm}
 +g_{n-1}\widetilde b_{n} +\frac{b}{2} \widetilde b_{n} +(n-2) \widetilde c_{n}
 -r_n \widetilde c_{n+1} -s_n\widetilde b_{n} , \\
u_n & = ((\beta_{n-2}+\beta_{n-1} ) \gamma_{n-1} \widetilde b_{n} + \gamma_n \gamma_{n-1}) h_2+\frac{a}{2} (\beta_{n-2} \widetilde c_{n} +\gamma_{n-1} \widetilde b_{n} ) 
 +g_{n-2} \widetilde c_{n} +h_{n-1} \widetilde b_{n} \\
& \hspace{8.5cm} +\frac{b}{2} \widetilde c_{n} - s_n \widetilde c_{n} - t_n \widetilde b_{n-1}
 ,
\end{align*}
with the \( g_n \), \( h_n \) given in~\eqref{eq:estruturabesselkrallfrink}.
The last expression enables us to express
\begin{align*}
\begin{cases}
\displaystyle
x^2 \widetilde{\mathbb B}_n^\prime
+ \widetilde{\mathbb B}_n 
\big( h_2 x^2 + h_1 x +h_0 \big)
 = M_n^{11} \widetilde{\mathbb B}_n + M_n^{12} ( - C_{n-1} \widetilde{\mathbb B}_{n-1}) , \\
\displaystyle
x^2 ( - C_{n-1} \widetilde{\mathbb B}_{n-1})^\prime
+ ( - C_{n-1} \widetilde{\mathbb B}_{n-1} )
\big( h_2 x^2 + h_1 x +h_0 \big)
 = M_n^{21} \widetilde{\mathbb B}_n + M_n^{22} ( - C_{n-1} \widetilde{\mathbb B}_{n-1}) ,
 \end{cases} 
\end{align*}
where
\begin{align*}
M_n^{11} & = h_2 \big( (x \operatorname I - \xi_{n+1})(x \operatorname I - \xi_n \big) - \eta_{n+1}) 
+ r_n (x \operatorname I - \xi_n) + s_n - u_n \eta_{n-1}^{-1} , 
 \\
M_n^{12} & = (h_2 (x \operatorname I + \xi_{n+1})\eta_{n} + r_n \eta_n - t_n - u_n \eta_{n-1}^{-1} (x \operatorname I - \xi_{n-1}))C_{n-1}^{-1} ,
 \\
M_n^{21} & = - C_{n-1} \big( h_2 (x \operatorname I - \xi_{n}) + r_{n-1} - t_{n-1} \eta_{n-1}^{-1} -u_{n-1} \eta_{n-2}^{-1} (x \operatorname I - \xi_{n-2}) \eta_{n-1}^{-1} \big) , 
 \\ 
M_n^{22} & = C_{n-1} \big( -h_2 \eta_n, + \widetilde s_{n - 1} + 
 \widetilde t_{n - 1}
 \eta_{n - 1}^{-1} (x \operatorname I - \xi_{n - 1} ) - 
 \widetilde u_{n - 1} \eta_{n - 2}^{-1} \\
 & \hspace{5cm}
 + 
 \widetilde u_{n - 1} 
 \eta_{n - 2}^{-1} (x \operatorname I - \xi_{n - 2} )
 \eta_{n - 1}^{-1} (x \operatorname I - \xi_{n - 1} ) \big) C_{n-1}^{-1} ,
\end{align*}
with
\(
C_{n}^{-1} = a_n^{-1}
 \begin{bsmallmatrix}
1 + c^2 \gamma_{n+1} & 0 \\
 0 & 1 + c^2 \gamma_n 
\end{bsmallmatrix} 
 a_n^{-\top} \Big( \prod_{j=1}^n \gamma_j \Big)
\).
It can be seen that the matrix
\( \widetilde M_n
=
\begin{bsmallmatrix} M_n^{11} & M_n^{12} \\ M_n^{21} & M_n^{22} \end{bsmallmatrix}
 \) 
satisfies the zero curvature formula,
\begin{align*}
x^2 \big( T^{\mathsf L}_n \big)^\prime (x) & 
= \widetilde M^{\mathsf L}_{n+1} (x) T^{\mathsf L}_n(x) - T^{\mathsf L}_n (x) \widetilde M^{\mathsf L}_{n} (x),
 && n \in \mathbb N .
\end{align*}
Moreover, it can be checked that the \( M_n^{i j} \), \( i,j \in \{ 1,2 \}\) satisfy the analogous relations like \eqref{eq:dPIV_with_B_1} and \eqref{eq:dPIV_with_B_2}, i.e. for all \( n \in \mathbb N \),
\begin{align*}
C_{n-1}^{-1} M_n^{21} = - M_n^{12} C_{n-1}, &&
C_{n-1}^{-1} M_n^{22} = \big( M_{n-1}^{11} + M_{n-1}^{12} C_{n-1} (x \operatorname I - \xi_{n-1} ) \big) C_{n-1}^{-1} 
 .
\end{align*}


\end{document}